\newtheorem{lemma}{Lemma}[section]
\newtheorem{theorem}{Theorem}[section]
\newtheorem{corollary}{Corollary}[section]
\newtheorem{remark}{Remark}[section]
\newtheorem{definition}{Definition}[section]
\newtheorem{proposition}{Proposition}[section]
\newtheorem{assumption}{Assumption}[section]
\begin{document}
%
\title{Distributed stochastic gradient tracking algorithm with variance reduction for non-convex optimization }
%
%
%

\author{Xia Jiang,
        Xianlin~Zeng,~\IEEEmembership{Member,~IEEE,}
        Jian~Sun,~\IEEEmembership{Member,~IEEE,}
        and~Jie~Chen,~\IEEEmembership{Fellow,~IEEE,}
\thanks{This work was supported in part by the National Natural Science Foundation of China (Nos. 61720106011, 62088101, U20B2073, 61925303), the National Key Research and Development Program of China under Grant 2018YFB1700100 and Beijing Institute of Technology Research Fund Program for Young Scholars. \emph{(Corresponding author: Xianlin Zeng.)}}
\thanks{X. Jiang (jiangxia@bit.edu.cn) and J. Sun (sunjian@bit.edu.cn) are with Key Laboratory of Intelligent Control and Decision of Complex Systems, School of Automation, Beijing Institute of Technology, Beijing, 100081, China, and also with the Beijing Institute of Technology Chongqing Innovation Center, Chongqing  401120, China}
\thanks{X. Zeng (xianlin.zeng@bit.edu.cn) is with Key Laboratory of Intelligent Control and Decision of Complex Systems, School of Automation, Beijing Institute of Technology, Beijing, 100081, China}
\thanks{J. Chen (chenjie@bit.edu.cn) is with Beijing Advanced Innovation Center for Intelligent Robots and Systems (Beijing Institute of Technology), Key Laboratory of Biomimetic Robots and Systems (Beijing Institute of Technology), Ministry of Education, Beijing, 100081, China, and also with the School of Electronic and Information Engineering, Tongji University, Shanghai, 200082, China}}
\maketitle

\begin{abstract}
 This paper proposes a distributed stochastic algorithm with variance reduction for general smooth non-convex finite-sum optimization, which has wide applications in signal processing and machine learning communities. In distributed setting, large number of samples are allocated to multiple agents in the network. Each agent computes local stochastic gradient and communicates with its neighbors to seek for the global optimum. In this paper, we develop a modified variance reduction technique to deal with the variance introduced by stochastic gradients. Combining gradient tracking and variance reduction techniques, this paper proposes a distributed stochastic algorithm, GT-VR, to solve large-scale non-convex finite-sum optimization over multi-agent networks. A complete and rigorous proof shows that the GT-VR algorithm converges to first-order stationary points with $O(\frac{1}{k})$ convergence rate. In addition, we provide the complexity analysis of the proposed algorithm. Compared with some existing first-order methods, the proposed algorithm has a lower $\mathcal{O}(PM\epsilon^{-1})$ gradient complexity under some mild condition. By comparing state-of-the-art algorithms and GT-VR in experimental simulations, we verify the efficiency of the proposed algorithm.
\end{abstract}

\begin{IEEEkeywords}
distributed algorithm, variance reduction, non-convex finite-sum optimization, stochastic gradient, complexity analysis
\end{IEEEkeywords}

%
\IEEEpeerreviewmaketitle

\section{Introduction}
%
%
%
%

\IEEEPARstart{W}{ith} the development of big data, distributed finite-sum optimization has received extensive attention from researchers in signal processing, control and machine learning communities \cite{distri_cs,dis_opti_cy,dis_mides_CIS,finite_sum_ll,decen_op,distri_newton}. In distributed finite-sum optimization, large-scale signal information or training samples are allocated to different nodes, and each node updates variable by local data and obtains global optimal estimate through communication with neighbors. When the dataset is large or is naturally located in a decentralized setting or contains private information, it is infeasible to transmit the dataset over networks and handle it at a centralized node \cite{priv_subg,priv_time_varying,mul_opti,Dis_coop_CIS,finite_sum_ll}. In addition, for functions with large-size local data, computing the local gradient by the entire local dataset becomes practically difficult. Hence, methods that sample small batches of local dataset to compute stochastic gradients are favored. Due to the above reasons, distributed stochastic first-order algorithms are preferable as they own a low computation complexity without the calculation of Hessian matrix and are easy to analyze.
\par Distributed stochastic gradient algorithm is a combination of average consensus steps between neighbors and local stochastic gradients, which has been popular in machine learning tasks \cite{distri_g_ll,distri_ll_sg,strong_sg_ll}. With the similar design idea, considerable works have been studied for more complex optimization problems and various multi-agent networks in recent years, e.g., distributed stochastic gradient projection algorithms \cite{2010Distributed}, distributed stochastic mirror descent \cite{mirror_des},  distributed stochastic primal-dual algorithm over random networks with imperfect communications \cite{Lei2018ASYMPTOTIC} and stochastic gradient-push over time-varying directed graphs \cite{Nedic2013Distributed}. However, the performance of distributed stochastic gradient algorithm is generally limited by two components. One is the local variance introduced by stochastic gradient at each agent and the other is the heterogeneous datasets between different agents. To handle the local variance, many variance reduction techniques have been proposed to reduce storage space and computation complexity, such as SAGA \cite{saga_pa}, SVRG \cite{SVRG_pa}, SARAH \cite{SARAH_pa} and Asyn-VR \cite{asyn_vr}. Distributed variance-reduced stochastic gradient methods have also been developed for smooth and strongly-convex optimization in recent years \cite{gt-vr-strong,dis_stoch,linear_vr}. To achieve robustness to heterogeneous environments, some works develop distributed bias-correction techniques such as gradient tracking \cite{next_pa,linear_con_pa}, EXTRA \cite{extra_weishi}, and primal-dual principles \cite{DLAforcons,Ling2015DLMDL}. Integrating variance reduction and bias-correction techniques, efficient distributed algorithms with linear convergence rate arise for strongly-convex finite-sum optimization \cite{gt-vr-strong}. However, the applicability of these distributed methods for non-convex optimization remains unclear.
\par Large-scale non-convex optimization has wide applications including logistic regression with non-convex regularization and neural networks training. When the cost functions are non-convex, the design and theoretical analysis of efficient algorithms become difficult due to the lack of good properties of convexity. 
Very recent works have proposed distributed variance-reduced methods for non-convex finite-sum problems. \cite{sun2019improving} has proposed D-GET for decentralized non-convex finite-sum minimization, which considers local SARAH-type variance-reduced technique and gradient tracking. However, as is pointed by \cite{xin2020nearoptimal}, D-GET does not have a network-independent gradient complexity. 
GT-SARAH proposed in \cite{xin2020nearoptimal} has achieved a near-optimal total gradient computation complexity at the cost of twice communication rounds of the D-GET algorithm.
\cite{xin2020fast} has proposed a GT-SAGA algorithm by combining the SAGA and gradient tracking techniques. However, SAGA needs additional storage space compared with SVRG. Inspired by SVRG technique, we propose a distributed stochastic first-order algorithm with low network-independent gradient complexity for large-scale finite-sum non-convex optimization in this paper. 
\par The contributions of this paper are summarized as following.
\begin{itemize}
	\item [(1)]  For general smooth non-convex optimization, we propose a novel distributed stochastic iterative algorithm, GT-VR, by combining gradient tracking and variance reduction techniques. The variance reduction in the proposed algorithm is a modified version of SVRG technique \cite{SVRG_pa} and makes use of Bernoulli distribution to reduce the local variance introduced by stochastic gradient.
	\item [(2)] By linear matrix inequality, we prove that the proposed GT-VR algorithm converges to first-order stationary points with $O(\frac{1}{k})$ convergence rate. To the best of our knowledge, for general smooth non-convex optimization, it is the first work to provide a sublinear convergence rate without steady-state error for distributed stochastic algorithms designed by gradient tracking and variance reduction. In addition, we provide the range of constant step-sizes and the probability range of the Bernoulli distribution.
	\item [(3)] Compared with some newest algorithms, the proposed algorithm has a lower gradient complexity. To be specific, compared with distributed algorithms DSGT \cite{DSGT}, D2 \cite{pmlr-v80-tang18a}, DSGD \cite{DSGD}, whose gradient complexity is $\mathcal{O}(\nu^2\epsilon^{-2})$, the proposed algorithm has a lower network-independent gradient complexity $\mathcal{O}(PM\epsilon^{-1})$ under some mild condition. Comparative experimental results of these algorithms and GT-VR also verify the efficiency of the proposed algorithm. 
\end{itemize}

 \par The remainder of the paper is organized as follows. Mathematical notations and some stochastic properties are given in section \ref{preliminaries_sec}. The problem description and distributed stochastic algorithm are provided in section \ref{solver_design}. The convergence properties of proposed methods are provided in section \ref{conver_sec} and are analyzed theoretically in section \ref{analysis_sec}. The efficiency of the distributed algorithms are verified by simulations in Section \ref{simulation} and the conclusion is made in section \ref{conclusion}. 

\section{Notations and Preliminaries} \label{preliminaries_sec}
\subsection{Mathematical notations}
\par We denote $\mathbb{R}$ as the set of real numbers, $\mathbb{R}^+$ as the set of positive real numbers, $\mathbb{Z}^+$ as the set of positive integers, $\mathbb{R}^n$ as the set of $n$-dimensional real column vectors, respectively. All vectors in the paper are column vectors, unless otherwise noted. $\mathbf{1}_n$ denotes an $n \times 1$ vector with all elements of $1$, $0_d$ denotes a $d\times 1$ vector with all elements of $0$ and $I_d$ denotes a $d\times d$ identity matrix. The notation $\otimes$ denotes the Kronecker product and ${\rm max}\{\cdots\}$ denotes the maximum element in the set $\{\cdots\}$. For a real vector $v$, $\left\|v\right\|$ is the Euclidean norm. For a differentiable function $f(x)$, its gradient is represented by $\nabla f(x)$. In the following paper, subscript $i$ refers to this being the local variables of the $i$th agent, e.g., $x_i$ means local variable $x$ of agent $i$.
\par We fix a rich enough probability space $(\Omega,\mathcal{F},\mathbb{P})$, where all random variables in discussion are properly defined and $\mathbb{E}[\cdot]$ denotes the expectation operator with respect to the probability measure $\mathbb{P}$. Let $A$ be an event in $\mathcal{F}$ with nonzero probability and $X$ be a random variable. The conditional expectation of $X$ given $A$ is denoted by $\mathbb{E}[X|A]$. For an event $A\in \mathcal{F}$, its indicator function is denoted as $\mathbb{I}_A$. We use $\sigma(\cdot)$ to denote the $\sigma-algebra$ generated by the random variables and/or sets in its argument. For a matrix $A$, $d(A)$ denotes its spectral radius.

\subsection{Stochastic Theory}
For conditional expectation, there is one basic property, which is useful in the subsequent analysis and is stated as following.   
\begin{proposition}
	Let $X$, $Y$ and $X_i(1\leq i\leq n)$ be random variables, and $\mathbb{E}|X|<\infty$, $\mathbb{E}|X_i|<\infty(1\leq i\leq n)$ .
	\begin{itemize}
		\item [(1)] $\mathbb{E}[\sum_{i=1}^n \alpha_i X_i |Y]=\sum_{i=1}^n \alpha_i \mathbb{E}[X_i|Y]$ $(a.s.)$, where $\alpha_i$ is a constant.
		\item[(2)] If $X$ and $Y$ are mutual independent, then $\mathbb{E}[X|Y]=\mathbb{E}X$.
		\item[(3)]  $\mathbb{E}[\mathbb{E}[X|Y]]=\mathbb{E}X$.
	\end{itemize} 
\end{proposition}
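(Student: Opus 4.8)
The plan is to argue directly from the defining property of conditional expectation, which I take as given together with its existence and a.s.\ uniqueness (a consequence of the Radon--Nikodym theorem on the rich enough space $(\Omega,\mathcal{F},\mathbb{P})$ fixed above): for an integrable random variable $Z$, the quantity $\mathbb{E}[Z|Y]$ is the almost surely unique $\sigma(Y)$-measurable and integrable random variable $W$ satisfying the partial-averaging identity $\mathbb{E}[W\mathbb{I}_A]=\mathbb{E}[Z\mathbb{I}_A]$ for every $A\in\sigma(Y)$. Each of the three claims will be handled by the same recipe: write down an explicit candidate $W$, check that it is $\sigma(Y)$-measurable and integrable, verify that it satisfies the partial-averaging identity, and then invoke uniqueness.

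For (1), take $W:=\sum_{i=1}^n\alpha_i\,\mathbb{E}[X_i|Y]$. It is $\sigma(Y)$-measurable as a finite linear combination of $\sigma(Y)$-measurable random variables, and it is integrable because $\mathbb{E}|W|\le\sum_{i=1}^n|\alpha_i|\,\mathbb{E}\big|\mathbb{E}[X_i|Y]\big|\le\sum_{i=1}^n|\alpha_i|\,\mathbb{E}|X_i|<\infty$, using the conditional Jensen (or triangle) inequality and item (3) applied to $|X_i|$. For any $A\in\sigma(Y)$, linearity of the ordinary expectation gives $\mathbb{E}[W\mathbb{I}_A]=\sum_{i=1}^n\alpha_i\,\mathbb{E}[\mathbb{E}[X_i|Y]\mathbb{I}_A]=\sum_{i=1}^n\alpha_i\,\mathbb{E}[X_i\mathbb{I}_A]=\mathbb{E}\big[(\sum_{i=1}^n\alpha_iX_i)\mathbb{I}_A\big]$, so $W$ satisfies the defining property of $\mathbb{E}[\sum_{i=1}^n\alpha_iX_i|Y]$, and the identity follows by a.s.\ uniqueness.

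For (2), take the constant candidate $W:=\mathbb{E}X$, which is trivially $\sigma(Y)$-measurable and integrable; every $A\in\sigma(Y)$ has the form $A=\{Y\in B\}$ for some Borel set $B$, so the indicator $\mathbb{I}_A=\mathbb{I}_B(Y)$ is independent of $X$, whence $\mathbb{E}[X\mathbb{I}_A]=\mathbb{E}X\cdot\mathbb{E}\mathbb{I}_A=\mathbb{E}[W\mathbb{I}_A]$ and uniqueness identifies $W$ with $\mathbb{E}[X|Y]$. For (3), apply the defining identity with the choice $A=\Omega\in\sigma(Y)$, which yields at once $\mathbb{E}[\mathbb{E}[X|Y]]=\mathbb{E}[\mathbb{E}[X|Y]\mathbb{I}_\Omega]=\mathbb{E}[X\mathbb{I}_\Omega]=\mathbb{E}X$. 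None of these steps presents a genuine obstacle: the only point requiring care is the measure-theoretic bookkeeping — confirming integrability of each candidate and recalling that $\sigma(Y)$ is generated by the sets $\{Y\in B\}$ with $B$ Borel — so the \emph{hard part} is merely being precise about the definition rather than any substantive estimate. These facts are standard in graduate probability, and the proposition is recorded only to keep the subsequent stochastic arguments self-contained.
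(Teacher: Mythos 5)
Your proof is correct. Note, however, that the paper itself offers no proof of this proposition at all: it is recorded in Section II purely as standard background on conditional expectation (linearity, independence, and the tower property), to be quoted later in the convergence analysis. Your argument — taking the partial-averaging identity $\mathbb{E}[W\mathbb{I}_A]=\mathbb{E}[Z\mathbb{I}_A]$ for $A\in\sigma(Y)$ as the definition, exhibiting the natural candidate in each case, checking $\sigma(Y)$-measurability and integrability, and invoking a.s.\ uniqueness — is exactly the textbook derivation, so there is nothing to compare against beyond saying you have supplied the standard proof the authors omitted. One small stylistic point: in the integrability check for (1) you appeal to item (3) before you prove it; this is harmless since your proof of (3) uses only the defining identity with $A=\Omega$ and not (1), but it would read more cleanly to either prove (3) first or bound $\mathbb{E}\big|\mathbb{E}[X_i|Y]\big|\le\mathbb{E}|X_i|$ directly from conditional Jensen plus the defining identity with $A=\Omega$.
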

\par \textbf{Bernoulli distribution}: The Bernoulli($P$) distribution is the discrete probability distribution of a random variable which takes the value $1$ with probability $P$ and the value $0$ with probability $1-P$. If $X$ is a random variable with the Bernoulli($P$) distribution, then
\begin{align}
\mathbf{Pr}(X=1)=P, \quad \mathbf{Pr}(X=0)=1-P.
\end{align}

\section{Problem Description and Distributed Solver Design}\label{solver_design}
In this paper, we aim to solve the following distributed finite-sum optimization problem over a multi-agent network,
\begin{align}\label{pro_f}
\min_{x\in\mathbb{R}^d} f(x)\triangleq \frac{1}{n}\sum_{i=1}^n f_i(x), \ f_i(x)=\frac{1}{m_i}\sum_{j=1}^{m_i}f_{i,j}(x),
\end{align}
where $f_i:\mathbb{R}^d\to \mathbb{R}$ is the local differentiable objective function of agent $i$, further decomposed as the average of $m_i$ component costs $\{f_{i,j}\}_{j=1}^{m_i}$, $n$ is the number of agents, $m_i$ is the number of local samples, and $\sum_{i=1}^n m_i=M$ is the total number of samples in the network. The multi-agent network containing $n$ agents is denoted by $\mathcal{G}(\mathcal{V},\mathcal{E}, W)$, where $\mathcal{V}=\{1,\cdots,n\}$, $\mathcal{E}=\mathcal{V}\times \mathcal{V}$ and $W$ is the adjacent matrix associated with $\mathcal{G}$. In distributed setting, each agent handles local information and communicates with its neighbors over the network $\mathcal{G}$ to solve \eqref{pro_f} cooperatively.
\begin{remark}
	This formulation of optimization problem \eqref{pro_f} is widely adopted in empirical risk minimization, where each local cost $f_i$ can be considered as an empirical risk computed over a finite number of $m_i$ local data samples and lies at the heart of many modern machine learning problems \cite{mini_pro,sto_cg}. Compared with the parameter-server type machine learning system with a fusion center \cite{mach_learn_overair,training_reduc_commu}, distributed optimization problem \eqref{pro_f} can preserve data privacy, improve the computation efficiency and enhance network robustness. Furthermore, in many emerging applications such as collaborative filtering, federated learning, distributed beamforming and dictionary learning, the data is naturally collected in a distributed setting, and it is impossible to transfer the distributed data to a central location \cite{sun2019improving}. Therefore, decentralized computation has sparked considerable interest in both academia and industry. 
\end{remark}

\par Next, we design a distributed stochastic algorithm for the general smooth non-convex optimization \eqref{pro_f}. There are two challenges in the distributed stochastic algorithm design. One challenge is the slow convergence due to the variance of stochastic gradients by asymptotically estimating the local full gradient $\nabla f_i$, based on randomly selected samples from the local dataset of agent $i$. The other is the difference between local and global objective functions, i.e., $\nabla f_i(x^*)\neq 0_d$, $\forall i \in \{1,\cdots, n\}$, holds for the global optimum $x^*$. This issue may be handled by the popular gradient tracking technique that introduces a local gradient estimator to track the global gradient.
\par By combining the distributed gradient tracking \cite{zero_gt} with a variance reduction technique, we propose a first-order GT-VR algorithm. The complete implementation of GT-VR is summarized in Algorithm \ref{algo_sum}. The local gradient estimator $v_i^k$ is updated by 
\begin{align}\label{v_update}
	v_i^{k}=\nabla f_{i, s_{i}^{k}}(x_i^{k})-\nabla f_{i, s_{i}^{k}}(\tau_i^{k})+\nabla f_{i}(\tau_i^{k}).
\end{align} 
In addition, in GT-VR, we introduce the gradient tracking technique to achieve the global gradient tracking in distributed optimization.
\begin{algorithm}[H]
\caption{ GT-VR updating at each agent $i$}
	\label{algo_sum}
	\begin{algorithmic}[1]  
        \State Initialize: $x_i^1$; $\tau_i^1=x_i^1$; $\eta$; $\{w_{ir}\}_{r=1}^n$; $y_i^1=v_i^1=\nabla f_i(x_i^1)$.
        \For {$k=1,2,\cdots$}
        \State Update the local estimate of the solution:
        $$x_i^{k+1}=\sum_{r=1}^n w_{ir}(x_r^k-\eta y_r^k);$$
        
        \State Select $l_i^{k+1}$ at random from the Bernoulli($P$) distribution. If $l_i^{k+1}=1$,  $\tau_i^{k+1}=x_i^{k+1}$, and otherwise, $\tau_i^{k+1}=\tau_i^k$.
        \State Select $s_i^{k+1}$ uniformly at random from $\{1,\cdots,m_i\}$.
        \State Update the local stochastic gradient estimator by \eqref{v_update};
        \State Update the local gradient tracker:
        $$y_i^{k+1}=\sum_{r=1}^n w_{ir}(y_r^{k}+v_r^{k+1}-v_r^k);$$
        \EndFor
	\end{algorithmic}
\end{algorithm}
\begin{remark}
	 The variance reduction technique taken in GT-VR is a modification of the well-known SVRG technique. In both techniques, the entire local full gradient $\nabla f_i(x_i^k)$ needs to be computed with a certain probability and the local gradient estimator $v_i^k$ is updated by \eqref{v_update}. The only main difference is that in GT-VR, $\tau_i^k$ is updated following the Bernoulli distribution, while in SVRG, it is updated periodically. Denote $\mathcal{F}^k$ as the history of the dynamical system defined by $\sigma(\{s_i^t,l_i^t\}_{i=\{1,\cdots,n\}}^{t\leq k-1})$. Note that in SVRG, each local gradient estimator $v_i^k$ is an unbiased estimator of the local gradient $\nabla f_i(x_i^k)$ given $\mathcal{F}^k$ \cite{SVRG_pa}, whereas, it does not hold in our proposed algorithm GT-VR. This modification is vital for the convergence of proposed algorithm for the smooth non-convex optimization.
\end{remark}
\begin{remark}
	\par Compared with distributed deterministic optimization, which needs to compute the entire local gradient $\nabla f_i$  at each iteration, the proposed distributed stochastic first-order algorithm using sampled batch data to compute stochastic gradient is more suitable for the training and processing of large-scale data.
	
	\par Compared with GT-SAGA \cite{xin2020fast}, the proposed algorithm does not need to store the value of gradient and saves more storage space.
	Compared with the two-timescale hybrid algorithm GT-SARAH \cite{xin2020nearoptimal}, the proposed algorithm is one single-timescale randomized gradient algorithm. In addition, at each iteration, there are only two communication rounds with neighbors at each agent. However, GT-SARAH has a near-optimal gradient computational complexity, which is better than the proposed algorithm. 
\end{remark}
\section{Convergence Result}\label{conver_sec}
In this section, we provide the convergence analysis of the proposed algorithm GT-VR with some mild assumptions.
\begin{assumption}\label{W_assump}
	\begin{itemize}
		\item[(1)] Each cost function $f_{i,j}$ is uniformly $L$-smooth, i.e., for some $L>0$,
		$$\|\nabla f_{i,j}(x)-\nabla f_{i,j}(y)\|\leq L\|x-y\|, \ \forall x,y \in \mathbb{R}^d.$$
		\item[(2)] The adjacent matrix of $\mathcal{G}$, $W$, is a doubly stochastic matrix, where $W_{ii}>0$ for all $i\in \mathcal{V}$, and $W_{ij}>0$ if $(i,j)\in \mathcal{E}$ for $i, j \in \mathcal{V}$.
		\item[(3)] The family $\{l_i^k,s_i^k: i\in \mathcal{V}, k\geq 1\}$ of random variables in the proposed algorithm is independent.
	\end{itemize}
\end{assumption}
\par Assumption \ref{W_assump} (1) and (2) are common assumptions in distributed optimization. (1) guarantees that local batch objective functions $\{f_i\}_{i=1}^n$ and the global objective function $f$ are $L$-smooth. The adjacent matrix satisfying (2) holds for the family of undirected graphs and weight-balanced directed graphs. In addition, Assumption \ref{W_assump} (2) guarantees that the radius of the network $\rho$ satisfies
\begin{align}
	\rho\triangleq \sup_{\|x=1\|} \frac{\|W(x-n^{-1} \mathbf{1}_n \mathbf{1}_n^Tx)\|}{\|x-n^{-1} \mathbf{1}_n \mathbf{1}_n^Tx\|}<1.
\end{align}
Assumption \ref{W_assump} (3) is standard in the design of stochastic algorithm and is practical in application. 

 For the convenience of analysis, we define several  auxiliary quantities as following:
$$
{\bf x}^k =\left[\begin{matrix}x_1^k\\
\vdots\\
x_n^k
\end{matrix}\right],\ {\bf y}^k=\left[\begin{matrix}y_1^k\\ \vdots \\ y_n^k\end{matrix}\right], \  {\bf v}^k=\left[\begin{matrix}v_1^k\\ \vdots\\ v_n^k\end{matrix}\right],$$
$$ {\tau}^k=\left[\begin{matrix}\tau_1^k\\ \vdots\\ \tau_n^k\end{matrix}\right],
\ \nabla f(\mathbf{x}^k)=\left[\begin{matrix} \nabla f_1(x_1^k)\\ \vdots\\ \nabla f_n(x_n^k)\end{matrix}\right],$$\\
 $\bar{x}^k=\frac{1}{n}\sum_{i=1}^n x_i^k$, $\bar{y}^k=\frac{1}{n}\sum_{i=1}^n y_i^k$, $\bar{v}^k=\frac{1}{n}\sum_{i=1}^n v_i^k$, $\mathbf{\bar{x}}^k=\mathbf{1}_n\otimes \bar{x}^k\in\mathbb{R}^{nd}$ and $\mathbf{\bar{v}}^k=\mathbf{1}_n\otimes \bar{v}^k\in\mathbb{R}^{nd}$.
 \par Then, the proposed algorithm GT-VR satisfies
 \begin{align}
 {\bf x}^{k+1}&=(W\otimes I_d) \big({\bf x}^k-\eta {\bf y}^k\big),\label{x_up}\\
 {\bf y}^{k+1}&=(W\otimes I_d) \big({\bf y}^k+{\bf v}^{k+1}-{\bf v}^k\big),\label{y_up}
 \end{align}
 and 
 \begin{align}\label{bareq}
 \bar{y}^k=\bar{v}^k,\qquad \bar{x}^{k+1}=\bar{x}^k-\eta \bar{y}^k,
 \end{align}
 where the doubly stochastic property of $W$ is used to derive \eqref{bareq}.
 \par Define $\bar{\eta}=\min \Big\{ \frac{(1-3\rho^2)}{\big(16\rho^2 L^2+(32\rho^2 L^2+2)(1-P)\frac{\rho+1}{\rho}\big)5L},\frac{1}{6L}, \\ \sqrt{\frac{1-(\frac{4}{3}+\frac{8}{9} P)\rho^2}{2\mathrm{T}}} \Big\}$, where $\mathrm{T} \triangleq 16L^2+\big(\frac{8}{3}+\frac{16}{3}(1+\frac{1}{\rho})(1-P)\big)L^2+ (32+32P )L^2\rho^2 + \big(\frac{16}{9}+16(1-P)(1+\rho +\frac{2(\rho+1)}{9\rho})\big)L^2 \varepsilon_3\in \mathbb{R}^+$ and $\varepsilon_3$ is a positive number. 
\par Then, the convergence result of the proposed algorithm GT-VR is covered in the following theorem.
\begin{theorem}\label{conver_theo}
	Suppose Assumption \ref{W_assump} hold. Let $\rho^2<\frac{1}{3}$, $1-\frac{3\rho^2}{(1+\frac{1}{\rho})\frac{2}{9}+1+\rho}<P<1$, $0<\eta < \bar{\eta}$ and $f^*\triangleq \inf_{x\in\mathbb{R}^d} f(x)>-\infty$.\\ 
	Then $f(\bar{x}^k)$ converges,		$\frac{1}{k}\sum_{s=1}^{k}\mathbb{E}\|\nabla f(\bar{x}^{s})\|^2\leq \mathcal{O}(\frac{1}{k})$, $\frac{1}{k}\sum_{s=1}^{k}\mathbb{E}\|{\bf x}^{s}-{\bf \bar{x}}^{s}\|^2\leq  \mathcal{O}(\frac{1}{k}) $ and $\frac{1}{k}\sum_{s=1}^{k}\mathbb{E}\|{\bf y}^{s}-\nabla f(\mathbf{\bar{x}}^{s})\|^2\leq \mathcal{O}(\frac{1}{k})$.

\end{theorem}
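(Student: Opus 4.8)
The plan is to construct a Lyapunov-type (energy) function that simultaneously controls three quantities: the optimality gap $f(\bar{x}^k) - f^*$, the consensus error $\|\mathbf{x}^k - \bar{\mathbf{x}}^k\|^2$, and the gradient-tracking error $\|\mathbf{y}^k - \nabla f(\bar{\mathbf{x}}^k)\|^2$ (or equivalently $\|\mathbf{y}^k - \mathbf{1}_n\otimes\bar{y}^k\|^2$, which couples to $\|\mathbf{v}^k - \bar{\mathbf{v}}^k\|^2$). I would first derive one-step inequalities for each piece. For the optimality gap, $L$-smoothness of $f$ together with the averaged recursion $\bar{x}^{k+1}=\bar{x}^k-\eta\bar{y}^k$ and $\bar{y}^k=\bar{v}^k$ gives a descent bound of the form $\mathbb{E}[f(\bar{x}^{k+1})\mid\mathcal{F}^k] \le f(\bar{x}^k) - c_1\eta\|\nabla f(\bar{x}^k)\|^2 + c_2\eta\cdot(\text{consensus error}) + c_3\eta^2 L\,\mathbb{E}\|\bar{v}^k\|^2$, where the bias term $\mathbb{E}[\bar{v}^k\mid\mathcal{F}^k]-\nabla f(\bar{x}^k)$ must be handled carefully since, as the second remark stresses, $v_i^k$ is \emph{not} an unbiased estimator of $\nabla f_i(x_i^k)$ in GT-VR; its conditional mean equals $\nabla f_i(x_i^k)$ only because the sampling $s_i^k$ is independent of $x_i^k,\tau_i^k$ given $\mathcal{F}^k$, so actually $\mathbb{E}[v_i^k\mid\mathcal{F}^k]=\nabla f_i(x_i^k)$ and the true bias is the consensus gap $\nabla f_i(x_i^k)-\nabla f_i(\bar{x}^k)$, absorbed via $L$-smoothness.

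Next I would bound the consensus error using \eqref{x_up}: $\|\mathbf{x}^{k+1}-\bar{\mathbf{x}}^{k+1}\| \le \rho\|\mathbf{x}^k-\bar{\mathbf{x}}^k\| + \rho\eta\|\mathbf{y}^k-\bar{\mathbf{y}}^k\|$ (using $\rho<1$ from Assumption~\ref{W_assump}(2)), then square and apply Young's inequality with a free parameter to get $\mathbb{E}\|\mathbf{x}^{k+1}-\bar{\mathbf{x}}^{k+1}\|^2 \le \frac{1+\rho^2}{2}\mathbb{E}\|\mathbf{x}^k-\bar{\mathbf{x}}^k\|^2 + \frac{2\rho^2\eta^2}{1-\rho^2}\mathbb{E}\|\mathbf{y}^k-\bar{\mathbf{y}}^k\|^2$ or similar; the hypothesis $\rho^2<\tfrac13$ is what makes the contraction factor comfortably below one after coupling. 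The tracking error from \eqref{y_up} obeys $\|\mathbf{y}^{k+1}-\bar{\mathbf{y}}^{k+1}\| \le \rho\|\mathbf{y}^k-\bar{\mathbf{y}}^k\| + \rho\|\mathbf{v}^{k+1}-\mathbf{v}^k\|$, and the increment $\|\mathbf{v}^{k+1}-\mathbf{v}^k\|$ must be expanded through \eqref{v_update}, producing terms in $\|\mathbf{x}^{k+1}-\mathbf{x}^k\|$, $\|\tau^{k+1}-\tau^k\|$, and the change in $\nabla f_i(\tau_i^k)$. The crucial auxiliary step is bounding $\mathbb{E}\|\mathbf{x}^{k+1}-\mathbf{x}^k\|^2$ (which feeds back through $\eta\mathbf{y}^k$ and hence through $\|\bar{v}^k\|$, $\|\mathbf{y}^k-\bar{\mathbf{y}}^k\|$, and $\|\nabla f(\bar{x}^k)\|$) and, separately, controlling the \emph{variance} $\mathbb{E}\|v_i^k-\nabla f_i(x_i^k)\|^2$, which by the SVRG-style identity and $L$-smoothness is $\le L^2\mathbb{E}\|x_i^k-\tau_i^k\|^2$; this requires introducing a fourth tracked quantity $\sum_i\mathbb{E}\|x_i^k-\tau_i^k\|^2$ and using the Bernoulli update to show it contracts as $(1-P)$ per step plus a drift proportional to $\mathbb{E}\|\mathbf{x}^{k+1}-\mathbf{x}^k\|^2$ — this is exactly why the lower bound $1-\tfrac{3\rho^2}{(1+1/\rho)2/9+1+\rho}<P$ on the Bernoulli probability appears.

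Having assembled four coupled one-step inequalities, I would write them as $\mathbf{z}^{k+1} \preceq A(\eta,P,\rho,L)\,\mathbf{z}^k - b\,\eta\,\mathbb{E}\|\nabla f(\bar{x}^k)\|^2\,\mathbf{e} + (\text{descent in } f)$, and then — this is the "linear matrix inequality" the authors advertise — look for a positive vector $\mathbf{p}$ (or a Lyapunov combination $\mathcal{L}^k = f(\bar{x}^k) + \mathbf{p}^\top\mathbf{z}^k$) such that $\mathcal{L}^{k+1} \le \mathcal{L}^k - c\,\eta\,\mathbb{E}\|\nabla f(\bar{x}^k)\|^2 - c'\,(\text{consensus} + \text{tracking} + \tau\text{-drift})$ for some $c,c'>0$. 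Feasibility of this LMI over an interval of $\eta$ is precisely the content of the step-size bound $\eta<\bar\eta$ (with the three competing terms in the $\min$ coming from: the descent-vs-bias tradeoff, the $\tfrac{1}{6L}$ smoothness budget, and the quadratic-in-$\eta$ term $\mathrm{T}$ that controls the $\eta^2$ cross-terms). Once such a $\mathcal{L}^k$ exists, telescoping from $1$ to $k$ and using $f\ge f^*>-\infty$ yields $\sum_{s=1}^k\big(c\eta\mathbb{E}\|\nabla f(\bar{x}^s)\|^2 + c'\mathbb{E}\|\mathbf{x}^s-\bar{\mathbf{x}}^s\|^2 + c'\mathbb{E}\|\mathbf{y}^s-\bar{\mathbf{y}}^s\|^2\big) \le \mathcal{L}^1 - f^* < \infty$, giving the $\mathcal{O}(1/k)$ running-average rates; convergence of $f(\bar{x}^k)$ follows because $\mathcal{L}^k$ is monotone nonincreasing and bounded below, and $\|\mathbf{y}^s-\nabla f(\bar{\mathbf{x}}^s)\|^2 \le 2\|\mathbf{y}^s-\bar{\mathbf{y}}^s\|^2 + 2\|\bar{\mathbf{y}}^s-\nabla f(\bar{\mathbf{x}}^s)\|^2$ with the second term bounded by $\frac{2L^2}{n}\|\mathbf{x}^s-\bar{\mathbf{x}}^s\|^2$ via $\bar{y}^s=\bar{v}^s$ and $\mathbb{E}[\bar v^s\mid\mathcal F^s]$ computations. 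The main obstacle I expect is the bookkeeping in the tracking-error/variance coupling: closing the loop requires that the drift $\mathbb{E}\|\mathbf{x}^{k+1}-\mathbf{x}^k\|^2$ — which itself contains $\|\nabla f(\bar{x}^k)\|^2$ with an $\eta^2$ coefficient — does not overwhelm the $-c\eta\|\nabla f(\bar{x}^k)\|^2$ descent term, and simultaneously that the $(1-P)$ contraction of the $\tau$-drift beats the $\rho^2$-amplification it receives through $\nabla f_i(\tau_i^k)-\nabla f_i(\tau_i^{k+1})$; balancing these two constraints against each other is what forces the somewhat intricate joint conditions on $\rho^2<\tfrac13$, the explicit lower bound on $P$, and the three-way minimum defining $\bar\eta$.
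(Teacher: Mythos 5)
Your proposal follows essentially the same route as the paper's proof: an $L$-smoothness descent inequality for $f(\bar{x}^k)$ along $\bar{x}^{k+1}=\bar{x}^k-\eta\bar{v}^k$ (Proposition \ref{f_propo}), coupled one-step bounds on the consensus, gradient-tracking and $\tau$-related errors (Propositions \ref{vnablaf}--\ref{xsub}), a positive-weight small-gain condition on the resulting $3\times 3$ system (the paper certifies $d(C)\leq 3\rho^2$ with a right test vector $\varepsilon$ and sums the matrix geometric series, which is the dual bookkeeping of your left-weighted Lyapunov combination and one-shot telescoping), and finally $f\geq f^*$ plus a Robbins--Siegmund-type supermartingale lemma (Lemma \ref{conv_l}) for the $\mathcal{O}(1/k)$ rates and the convergence of $f(\bar{x}^k)$. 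The remaining differences are cosmetic---you track $\sum_i\|x_i^k-\tau_i^k\|^2$ where the paper tracks $\|\tau^k-\bar{\mathbf{x}}^k\|^2$, and your unbiasedness claim for $v_i^k$ is not needed by either argument, since the descent step can absorb the cross term $\langle\nabla f(\bar{x}^k),\frac{1}{n}\sum_i(v_i^k-\nabla f_i(\bar{x}^k))\rangle$ by Young's inequality as the paper does.
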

\begin{remark}
	Theorem \ref{conver_theo} implies that the proposed GT-VR algorithm converges to first-order stationary points with $O(\frac{1}{k})$ convergence rate. Compared with GT-HSGD \cite{xin2021hybrid}, which converges sublinearly at a rate of $O(\frac{1}{k})$ up to a steady-state error, the proposed GT-VR has no steady-state error.
\end{remark}
\par Then, we present the complexity of GT-VR in the following sense.
\begin{definition}\label{acc_def}
	The algorithm GT-VR is said to achieve an $\epsilon$-accurate stationary point of $f$ in $k$ iterations if
	\begin{align}
		\frac{1}{k}\sum_{s=1}^k \mathbb{E}[\|\nabla f(\bar{x}^s)\|^2]\leq \epsilon.
	\end{align}
\end{definition}
Based on the results in Theorem \ref{conver_theo}, the iteration complexity, gradient computation complexity and communication complexity of GT-VR are established in the following corollary.
\begin{corollary}\label{conver_corol}
	Suppose Assumption \ref{W_assump} hold. Let $\rho^2<\frac{1}{3}$, $1-\frac{3\rho^2}{(1+\frac{1}{\rho})\frac{2}{9}+1+\rho}<P<1$ and
	\begin{align}\label{eta_acc}
		0<\eta \leq \tilde{\eta},
	\end{align}
where $\tilde{\eta}=\min\big\{\bar{\eta},\frac{1-3\rho^2}{3\rho^2L}\big\}$.
	Then,  
	\begin{itemize}
		\item [(1)] GT-VR achieves an $\epsilon$-accurate stationary point of $f$ with $\mathcal{O}(\eta^{-2} \epsilon^{-1})$ iterations.
		\item [(2)] GT-VR achieves an $\epsilon$-accurate stationary point of $f$ in $\mathcal{O}(PM\eta^{-2} \epsilon^{-1})$ gradient computations across all agents.
		\item [(3)] GT-VR achieves an $\epsilon$-accurate stationary point of $f$ with $\mathcal{O}(\sum_{i=1}^n N_i\eta^{-2} \epsilon^{-1})$ communication rounds across all agents, where $N_i$ denotes the number of neighbors of agent $i$.
	\end{itemize}
\end{corollary}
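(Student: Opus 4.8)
\emph{Proof plan.} The plan is to obtain all three assertions from Theorem~\ref{conver_theo} and Definition~\ref{acc_def}: first make explicit, in terms of $\eta$, the constant hidden in the $\mathcal{O}(1/k)$ stationarity rate (this gives the iteration count), and then multiply by the per-iteration gradient and communication costs.

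\emph{Iteration count.} First I would revisit the Lyapunov (linear-matrix-inequality) argument behind Theorem~\ref{conver_theo}. It supplies weights $\varepsilon_1,\varepsilon_2,\varepsilon_3>0$, admissible precisely because $\rho^2<\tfrac13$, $1-\tfrac{3\rho^2}{(1+\frac1\rho)\frac29+1+\rho}<P<1$ and $0<\eta<\bar\eta$, and a potential $V^k=\mathbb{E}[f(\bar x^k)-f^*]+\varepsilon_1\mathbb{E}\|\mathbf{x}^k-\mathbf{\bar{x}}^k\|^2+\varepsilon_2\mathbb{E}\|\mathbf{y}^k-\mathbf{\bar{y}}^k\|^2+\varepsilon_3\mathbb{E}\|\mathbf{x}^k-\tau^k\|^2$ obeying a one-step decrease $V^{k+1}\le V^k-\theta_0\eta\,\mathbb{E}\|\nabla f(\bar x^k)\|^2-(\text{nonnegative terms})$ with $\theta_0>0$. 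Telescoping over $s=1,\dots,k$ and using $V^{k+1}\ge0$, $f^*>-\infty$ and $\tau_i^1=x_i^1$ (hence $\mathbb{E}\|\mathbf{x}^1-\tau^1\|^2=0$ and $V^1<\infty$) gives $\tfrac1k\sum_{s=1}^k\mathbb{E}\|\nabla f(\bar x^s)\|^2\le\tfrac{V^1}{\theta_0\eta\,k}$. The additional hypothesis $\eta\le\tfrac{1-3\rho^2}{3\rho^2L}$ of the corollary is what lets one keep $V^1$ and $1/\theta_0$ under control while exhibiting their $\eta$-dependence; tracking the powers of $\eta$ then shows $\tfrac{V^1}{\theta_0\eta}\le C\eta^{-2}$ with $C$ independent of $k,\eta,\epsilon$. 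So $\tfrac1k\sum_{s=1}^k\mathbb{E}\|\nabla f(\bar x^s)\|^2\le C\eta^{-2}k^{-1}$, and by Definition~\ref{acc_def} requiring this to be $\le\epsilon$ needs $k=\mathcal{O}(\eta^{-2}\epsilon^{-1})$, which is assertion (1).

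\emph{Per-iteration costs.} Next I would count. In one pass of Algorithm~\ref{algo_sum}, agent $i$ forms $v_i^{k+1}$ through \eqref{v_update}, using the two component gradients $\nabla f_{i,s_i^{k+1}}(x_i^{k+1}),\ \nabla f_{i,s_i^{k+1}}(\tau_i^{k+1})$ and the full local gradient $\nabla f_i(\tau_i^{k+1})=\tfrac1{m_i}\sum_{j=1}^{m_i}\nabla f_{i,j}(\tau_i^{k+1})$; the last, costing $m_i$ component gradients, is recomputed only when $\tau_i$ is refreshed, i.e.\ when the Bernoulli variable $l_i^{k+1}=1$, which happens with probability $P$ (otherwise the previously computed value is reused). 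Hence the expected number of component-gradient evaluations over the whole network per iteration is $\sum_{i=1}^n(2+P\,m_i)=2n+P\sum_{i=1}^n m_i=2n+PM=\mathcal{O}(PM)$, and multiplying by the $\mathcal{O}(\eta^{-2}\epsilon^{-1})$ iterations of (1) — the one-time initialization cost $\sum_i m_i=M$ being lower order — yields assertion (2). Finally, each iteration performs exactly the two averaging rounds $x_i^{k+1}=\sum_r w_{ir}(x_r^k-\eta y_r^k)$ and $y_i^{k+1}=\sum_r w_{ir}(y_r^k+v_r^{k+1}-v_r^k)$, in each of which agent $i$ exchanges one vector with each of its $N_i$ neighbors, so the per-iteration network-wide communication is $2\sum_{i=1}^n N_i$; multiplying by the iteration count gives assertion (3).

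The non-routine step, which I expect to be the main obstacle, is making the $\eta$-dependence of the Lyapunov constant explicit inside the iteration count — this is exactly where the extra bound $\eta\le\tfrac{1-3\rho^2}{3\rho^2L}$ enters — whereas once that is done the two counting arguments are elementary.
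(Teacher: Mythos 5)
Your proposal is correct and follows essentially the paper's own route: extract the explicit $\mathcal{O}(\eta^{-2}k^{-1})$ constant from the rate bound behind Theorem \ref{conver_theo} (the paper does this via \eqref{nabla_conv}, using \eqref{eta_acc} to get $\mathcal{C}\ge \eta/9$ and $\tfrac{1}{\eta L}\ge \tfrac{3\rho^2}{1-3\rho^2}$), and then multiply the resulting $\mathcal{O}(\eta^{-2}\epsilon^{-1})$ iteration count by the expected per-iteration costs, namely $\sum_{i=1}^n(Pm_i+2)=PM+2n$ gradient evaluations and $2\sum_{i=1}^n N_i$ communications. Your Lyapunov one-step-decrease phrasing is merely a repackaging of the paper's summed LMI bound \eqref{sum_term} combined with Proposition \ref{f_propo}, and your counting arguments for (2) and (3) coincide with the paper's.
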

\begin{remark}
	 In large-scale sample case, the gradient complexity is $\mathcal{O}(PM\eta^{-2}\epsilon^{-1})$. If $\eta=\frac{1}{6L}$,
	the gradient complexity is $\mathcal{O}(PM\epsilon^{-1})$, which is network-independent. If the accuracy $\epsilon$ is chosen small enough, this gradient complexity is lower than the gradient complexity $\mathcal{O}(\nu^2\epsilon^{-2})$ of DSGT \cite{DSGT}, D2 \cite{pmlr-v80-tang18a} and DSGD \cite{DSGD}, where $\nu$ is the bounded variance of stochastic gradient within each agent. 
\end{remark}

\section{theoretical analysis}\label{analysis_sec}
\par In this section, we present the proofs for Theorem \ref{conver_theo} and Corollary \ref{conver_corol}. The analysis framework is general and may be applied to other distributed algorithms based on variance reduction and gradient tracking. Recall that $\mathcal{F}^k$ is the history of the dynamical system, defined by $\mathcal{F}^k\triangleq \sigma(\{s_i^t,l_i^t\}_{i=\{1,\cdots,n\}}^{t\leq k-1})$. The convergence analysis is roughly divided into three steps. (1) The first step is to prove the boundness of $\mathbb{E}[\|{\bf x}^k-{\bf \bar{x}}^k\|^2]$, $\mathbb{E}[\|{\bf \tau}^k-{\bf \bar{x}}^k\|^2]$ and
$\mathbb{E}[\|{\bf y}^k-{\bf \bar{v}}^k\|^2]$ by constructing a linear matrix inequality. (2) The second step is to prove the boundness of  $\sum_{s=1}^k \mathbb{E}[\|{\bf x}^s-{\bf \bar{x}}^s\|^2]$, $\sum_{s=1}^k \mathbb{E}[\|{\bf \tau}^s-{\bf \bar{x}}^s\|^2]$ and
$\sum_{s=1}^k \mathbb{E}[\|{\bf y}^s-{\bf \bar{v}}^s\|^2]$ by recursion. (3) Finally, we prove the boundness of $\sum_{s=1}^k \mathbb{E}[\|\nabla f (\bar{x}^s)\|^2]$ and the convergence of $f(\bar{x}^k)$.

\par At first, we provide a standard result for the adjacent matrix $W$ satisfying Assumption \ref{W_assump}, which will be frequently used in the subsequent analysis.
\begin{lemma}\cite{zero_gt}\label{W_lemma}
	Suppose Assumption \ref{W_assump} hold. For any $x_1,\cdots,x_n\in \mathbb{R}^d$, we have
	\begin{align*}
		\|(W\otimes I_d)(\mathbf{x}-\mathbf{\bar{x}})\|\leq \rho \|\mathbf{x}-\mathbf{\bar{x}}\|,
	\end{align*}
	where $\rho$ is the radius of the underlying network.
\end{lemma}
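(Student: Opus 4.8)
The plan is to reduce this Kronecker-structured inequality on $\mathbb{R}^{nd}$ to $d$ copies of the scalar contraction already encoded in the definition of $\rho$. Write $J\triangleq n^{-1}\mathbf{1}_n\mathbf{1}_n^T$ for the averaging matrix, so that $\mathbf{\bar{x}}=(J\otimes I_d)\mathbf{x}$ and the Kronecker identity gives $\mathbf{x}-\mathbf{\bar{x}}=((I_n-J)\otimes I_d)\mathbf{x}$. Setting $\mathbf{z}\triangleq\mathbf{x}-\mathbf{\bar{x}}$, the first step is to record its block structure: the $i$-th block is $z_i=x_i-\bar{x}$, and summing over $i$ yields $\sum_{i=1}^n z_i=0_d$, i.e.\ the blocks of $\mathbf{z}$ have zero average.

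Next I would peel off the $d$ coordinate directions. For each $c\in\{1,\dots,d\}$ let $\zeta^{(c)}\in\mathbb{R}^n$ collect the $c$-th entries of the blocks, $(\zeta^{(c)})_i=(z_i)_c$. The zero-average property becomes $\mathbf{1}_n^T\zeta^{(c)}=0$, so $\zeta^{(c)}\perp\mathbf{1}_n$ and hence $(I_n-J)\zeta^{(c)}=\zeta^{(c)}$. The key observation is that $W\otimes I_d$ acts slice-wise as $W$: the $c$-th coordinate of the $i$-th block of $(W\otimes I_d)\mathbf{z}$ equals $\sum_{r=1}^n W_{ir}(z_r)_c=(W\zeta^{(c)})_i$. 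Collecting coordinates, the Euclidean norm splits across slices,
$$\|(W\otimes I_d)\mathbf{z}\|^2=\sum_{c=1}^d\|W\zeta^{(c)}\|^2,\qquad \|\mathbf{z}\|^2=\sum_{c=1}^d\|\zeta^{(c)}\|^2.$$

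The final step is to invoke the definition of $\rho$ on each slice. Since the ratio $\|W(x-Jx)\|/\|x-Jx\|$ is scale-invariant, choosing $x=\zeta^{(c)}$ and using $(I_n-J)\zeta^{(c)}=\zeta^{(c)}$ reduces the defining quantity to $\|W\zeta^{(c)}\|/\|\zeta^{(c)}\|$, whence $\|W\zeta^{(c)}\|\le\rho\|\zeta^{(c)}\|$ for every $c$ (the bound being trivial when $\zeta^{(c)}=0$). Squaring, summing over $c$, and comparing with the two displayed identities gives $\|(W\otimes I_d)\mathbf{z}\|^2\le\rho^2\|\mathbf{z}\|^2$; taking square roots and recalling $\mathbf{z}=\mathbf{x}-\mathbf{\bar{x}}$ yields the claim.

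I expect the only genuine subtlety to be the bookkeeping that links the definition of $\rho$---phrased through the projection $x-Jx$ and a supremum over $\|x\|=1$---to the per-slice bound, which requires the scale-invariance remark together with the identity $(I_n-J)\zeta^{(c)}=\zeta^{(c)}$ that holds precisely because each slice is orthogonal to $\mathbf{1}_n$. Everything else rests on the routine Kronecker identity $(W\otimes I_d)((I_n-J)\otimes I_d)=(W(I_n-J))\otimes I_d$ and on Pythagoras across the $d$ coordinate slices; since the statement is quoted from \cite{zero_gt}, this slice-wise verification is essentially standard.
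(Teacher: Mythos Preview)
Your argument is correct. The slice-wise reduction is clean: once you observe that each coordinate slice $\zeta^{(c)}$ lies in $\mathbf{1}_n^\perp$, the definition of $\rho$ applies directly, and Pythagoras across the $d$ slices reassembles the bound on $\mathbb{R}^{nd}$.

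As for comparison with the paper: there is nothing to compare. The paper does not prove this lemma at all; it simply quotes the statement with a citation to \cite{zero_gt} and moves on. Your write-up therefore supplies more than the paper does. The approach you take is the standard one for lifting a contraction on $\mathbb{R}^n$ through a Kronecker product with $I_d$, and is essentially what any proof of this fact would do (an equivalent phrasing would note that $\|(W-J)\otimes I_d\|_2 = \|W-J\|_2 = \rho$, which is your argument compressed into a single spectral-norm identity).
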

\par In the following proposition, we present some useful properties of local stochastic gradient estimator $v_i^k$, local gradient $\nabla f_i(x_i^k)$ and $\nabla f_i(\bar{x}^k)$, which will be used to bound  $ \mathbb{E}[\|{\bf \tau}^k-{\bf \bar{x}}^k\|^2]$ and to prove Theorem \ref{conver_theo}.

\begin{proposition}\label{vnablaf}
	Suppose Assumption \ref{W_assump} holds. Then,
		\begin{align}
			 &\mathbb{E}[\|\mathbf{v}^k-\nabla f(\mathbf{x}^k)\|^2]\notag\\
			\leq& 2L^2\mathbb{E}[\|\mathbf{x}^k-\bar{\mathbf{x}}^k\|^2]+2L^2\mathbb{E}[\|\mathbf{\tau}^k-\bar{\mathbf{x}}^k\|^2],
		\end{align}
		\begin{align}\label{vsubfcondi}
			&\mathbb{E}[\|\frac{1}{n}\sum_{i=1}^n(v_i^k-\nabla f_i(\bar{x}^k))\|^2]\notag\\
			\leq& \frac{1}{n}\sum_{i=1}^n(6L^2\mathbb{E}[\|x_i^k-\bar{x}^k\|^2]+4L^2\mathbb{E}[\|\tau_i^k-\bar{x}^k\|^2])
		\end{align}
	 and
		\begin{align}\label{vcondi}
			&\frac{1}{2}\mathbb{E}[\|\bar{v}^k\|^2]
			\leq \mathbb{E}[\|\nabla f(\bar{x}^k)\|^2]\notag\\
			&+\frac{2L^2}{n}\sum_{i=1}^n(3\mathbb{E}[\|x_i^k-\bar{x}^k\|^2]+2\mathbb{E}[\|\tau_i^k-\bar{x}^k\|^2]).
		\end{align}
\end{proposition}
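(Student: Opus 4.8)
The plan is to derive all three bounds from the variance-reduction structure of the estimator \eqref{v_update} together with the $L$-smoothness in Assumption \ref{W_assump}(1) and the doubly-stochastic identities \eqref{bareq}. First I would control $\mathbb{E}[\|\mathbf{v}^k-\nabla f(\mathbf{x}^k)\|^2]$. Writing componentwise, $v_i^k-\nabla f_i(x_i^k)=\bigl(\nabla f_{i,s_i^k}(x_i^k)-\nabla f_{i,s_i^k}(\tau_i^k)\bigr)-\bigl(\nabla f_i(x_i^k)-\nabla f_i(\tau_i^k)\bigr)$. Conditioning on $\mathcal{F}^k$ and using that $s_i^k$ is uniform on $\{1,\dots,m_i\}$, the term $\nabla f_i(x_i^k)-\nabla f_i(\tau_i^k)$ is exactly the conditional mean of $\nabla f_{i,s_i^k}(x_i^k)-\nabla f_{i,s_i^k}(\tau_i^k)$, so by the "variance $\le$ second moment" inequality $\mathbb{E}[\|Z-\mathbb{E}Z\|^2]\le\mathbb{E}[\|Z\|^2]$ we get $\mathbb{E}[\|v_i^k-\nabla f_i(x_i^k)\|^2\mid\mathcal{F}^k]\le \mathbb{E}[\|\nabla f_{i,s_i^k}(x_i^k)-\nabla f_{i,s_i^k}(\tau_i^k)\|^2\mid\mathcal{F}^k]\le L^2\|x_i^k-\tau_i^k\|^2$, the last step by $L$-smoothness of each $f_{i,j}$. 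Then I would split $\|x_i^k-\tau_i^k\|^2\le 2\|x_i^k-\bar{x}^k\|^2+2\|\tau_i^k-\bar{x}^k\|^2$, sum over $i$, and take total expectation; stacking the coordinates reproduces the first inequality. (Care is needed that $\tau_i^k$ and $x_i^k$ are $\mathcal{F}^k$-measurable while $s_i^k$ is fresh randomness — this is exactly what makes the conditional-variance step legitimate.)

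For \eqref{vsubfcondi}, I would insert $\nabla f_i(x_i^k)$ as an intermediate point: $v_i^k-\nabla f_i(\bar{x}^k)=\bigl(v_i^k-\nabla f_i(x_i^k)\bigr)+\bigl(\nabla f_i(x_i^k)-\nabla f_i(\bar{x}^k)\bigr)$. The averaged cross term need not vanish in expectation (the estimator is biased, per the second Remark), so I would just bound $\|\tfrac1n\sum_i a_i+\tfrac1n\sum_i b_i\|^2\le 2\|\tfrac1n\sum_i a_i\|^2+2\|\tfrac1n\sum_i b_i\|^2\le \tfrac2n\sum_i\|a_i\|^2+\tfrac2n\sum_i\|b_i\|^2$ by convexity of the square. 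The first piece is handled by the per-agent bound just proved (giving $\tfrac2n\sum_i L^2\,2(\|x_i^k-\bar{x}^k\|^2+\|\tau_i^k-\bar{x}^k\|^2)$), and the second piece is $\tfrac2n\sum_i L^2\|x_i^k-\bar{x}^k\|^2$ by $L$-smoothness of $f_i$; adding the $\|x_i^k-\bar{x}^k\|^2$ coefficients $4L^2+2L^2=6L^2$ and keeping $4L^2$ on the $\|\tau_i^k-\bar{x}^k\|^2$ terms gives \eqref{vsubfcondi}.

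For \eqref{vcondi}, note $\bar{v}^k=\tfrac1n\sum_i v_i^k=\nabla f(\bar{x}^k)+\tfrac1n\sum_i\bigl(v_i^k-\nabla f_i(\bar{x}^k)\bigr)$ since $f=\tfrac1n\sum_i f_i$. Then $\tfrac12\|\bar{v}^k\|^2\le \|\nabla f(\bar{x}^k)\|^2+\|\tfrac1n\sum_i(v_i^k-\nabla f_i(\bar{x}^k))\|^2$ using $\tfrac12\|a+b\|^2\le\|a\|^2+\|b\|^2$; substituting \eqref{vsubfcondi} for the second term yields exactly \eqref{vcondi}. All three statements then follow by taking expectations throughout.

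The only genuinely delicate point — the rest is bookkeeping with $\|a+b\|^2\le 2\|a\|^2+2\|b\|^2$ and $L$-smoothness — is the conditional-expectation argument establishing $\mathbb{E}[\|v_i^k-\nabla f_i(x_i^k)\|^2\mid\mathcal{F}^k]\le L^2\|x_i^k-\tau_i^k\|^2$: one must verify that conditioning on $\mathcal{F}^k$ freezes $x_i^k$ and $\tau_i^k$ (true, since they depend only on samples and coin flips up to time $k-1$), that $s_i^k$ is conditionally uniform and independent across agents by Assumption \ref{W_assump}(3), and that the tower property then delivers the unconditional bound. I would state this carefully and invoke Proposition (the conditional-expectation identities) and the Bernoulli description as needed.
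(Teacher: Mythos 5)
Your overall route is the same as the paper's: a conditional variance decomposition ($\mathbb{E}[\|Z-\mathbb{E}[Z|\cdot]\|^2|\cdot]\le\mathbb{E}[\|Z\|^2|\cdot]$) for part (a), then $L$-smoothness and Young-type inequalities for parts (b) and (c), which coincide with the paper's argument almost line by line. The only substantive difference in (a) is cosmetic: you apply $\|x_i^k-\tau_i^k\|^2\le 2\|x_i^k-\bar{x}^k\|^2+2\|\tau_i^k-\bar{x}^k\|^2$ after invoking $L$-smoothness, while the paper inserts $\bar{x}^k$ inside the gradient difference before bounding; both give the same constants.

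However, at the very point you flag as "the only genuinely delicate point" your justification is not correct as stated. You claim that conditioning on $\mathcal{F}^k$ freezes both $x_i^k$ and $\tau_i^k$ because "they depend only on samples and coin flips up to time $k-1$". That is true for $x_i^k$, but false for $\tau_i^k$: by the algorithm, $\tau_i^k$ is set using the time-$k$ Bernoulli draw $l_i^k$ ($\tau_i^k=x_i^k$ if $l_i^k=1$, $\tau_i^k=\tau_i^{k-1}$ otherwise), whereas $\mathcal{F}^k=\sigma(\{s_i^t,l_i^t\}_{t\le k-1})$ stops at $k-1$. Consequently, given $\mathcal{F}^k$ alone, $\nabla f_i(x_i^k)-\nabla f_i(\tau_i^k)$ is \emph{not} the conditional mean of $\nabla f_{i,s_i^k}(x_i^k)-\nabla f_{i,s_i^k}(\tau_i^k)$; that conditional mean is $\nabla f_i(x_i^k)-\mathbb{E}[\nabla f_i(\tau_i^k)\,|\,\mathcal{F}^k]$, so the "variance $\le$ second moment" step cannot be invoked directly at the level of $\mathcal{F}^k$. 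The paper repairs exactly this by conditioning on the enlarged $\sigma$-algebra $\mathcal{A}^k=\sigma\big(\cup_{i=1}^n\sigma(l_i^k),\mathcal{F}^k\big)$, on which $\tau_i^k$ is measurable while $s_i^k$ remains independent by Assumption \ref{W_assump}(3), and then uses the tower property $\mathbb{E}[\,\cdot\,|\mathcal{F}^k]=\mathbb{E}[\mathbb{E}[\,\cdot\,|\mathcal{A}^k]\,|\mathcal{F}^k]$ (in fact, taking total expectations at the end makes the outer conditioning immaterial). With this one-line correction your inequality $\mathbb{E}[\|v_i^k-\nabla f_i(x_i^k)\|^2]\le L^2\,\mathbb{E}[\|x_i^k-\tau_i^k\|^2]$ is valid and the rest of your proof delivers the proposition exactly as in the paper.
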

\begin{proof}
	See Appendix \ref{vnablaproof}.
\end{proof}
 \par With Proposition \ref{vnablaf}, in the following lemma, we establish bounds on $\mathbb{E}[\|{\bf x}^k-{\bf \bar{x}}^k\|^2]$, $\mathbb{E}[\|\tau^k-{\bf \bar{x}}^k\|^2]$ and
 $\mathbb{E}[\|{\bf y}^k-{\bf \bar{v}}^k\|^2]$, respectively.
 \begin{proposition}\label{xsub}
 	Suppose Assumption \ref{W_assump} hold. We have the following inequalities:
 		\begin{align}
 		{\rm (a)}\quad	&\mathbb{E}[\|{\bf x}^{k+1}-{\bf \bar{x}}^{k+1}\|^2]\notag\\
 			\leq &2\rho^2\mathbb{E}[\|{\bf x}^k-{\bf \bar{x}}^k\|^2]+2\rho^2\eta^2\mathbb{E}[\|{\bf y}^k-{\bf \bar{v}}^k\|^2]\label{xsubxbareq},
 		\end{align}
 		\begin{align}
 		{\rm (b)}\quad	&\mathbb{E}[\|\mathbf{\bar{x}}^{k+1}-\tau^{k+1}\|^2]\notag\\
 			\leq & \big(2\rho^2 P +(1-P)(\eta^2+\frac{\eta}{\beta})12L^2\big) \mathbb{E}[\|\mathbf{\bar{x}}^k-\mathbf{x}^k\|^2]\notag\\
 			& + 2\rho^2 \eta^2 P \mathbb{E}[\|\mathbf{y}^k-\mathbf{\bar{v}}^k\|^2]\notag\\
 			&+(1\!-\!P)\big((\eta^2+\frac{\eta}{\beta})8L^2+(1+\eta\beta)\big) \mathbb{E}[\|\mathbf{\bar{x}}^k -\tau^k\|^2]\notag\\
 			&+ (\eta^2+\frac{\eta}{\beta}) 2n(1-P) \mathbb{E}[\|\nabla f(\bar{x}^k)\|^2],\label{tausubxbar}
 		\end{align}
 		\begin{align}
 		{\rm (c)}\quad	&\mathbb{E}[\|{\bf y}^{k+1}-{\bf \bar{v}}^{k+1}\|^2]\notag\\
 			\leq &(2\rho^2+48\eta^2L^2\rho^4+32 P \eta^2L^2\rho^4) \mathbb{E}[\|{\bf y}^k-{\bf \bar{v}}^k\|^2]\notag\\
 			&+16\rho^2L^2\big(1+6\eta^2L^2+(2+2P)\rho^2\notag\\
 			&\qquad+12L^2(1-P)(\eta^2+\frac{\eta}{\beta})\big)\mathbb{E}[\|{\bf x}^k-{\bf \bar{x}}^k\|^2]\notag\\
 			&+16\rho^2L^2\big(4\eta^2L^2\!+\!(1\!-\! P)[1+\eta\beta + (\eta^2+\frac{\eta}{\beta})8L^2]\big)\notag\\
 			&\qquad\mathbb{E}[\|\tau^k-{\bf \bar{x}}^k\|^2]\notag\\
 			&+16\rho^2L^2n\big(\eta^2+2 (1-P)(\eta^2+\frac{\eta}{\beta})\big)  \mathbb{E}[\|\nabla f(\bar{x}^k)\|^2],\label{ysub}
 		\end{align}
 where $\beta \in \mathbb{R}^+$, $\eta$ is the step-size and $\rho$ is the radius of the underlying network.
 \end{proposition}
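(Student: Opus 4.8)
The plan is to derive the three recursions (a), (b), (c) by expanding the updates \eqref{x_up}, \eqref{y_up}, and the definition of $\tau^{k+1}$, then taking conditional expectations with respect to $\mathcal{F}^{k+1}$ (so that the sampling of $s_i^{k+1}$ and $l_i^{k+1}$ becomes explicit), applying Lemma \ref{W_lemma} to control the mixing, and finally invoking Proposition \ref{vnablaf} to bound the stochastic-gradient error terms before taking total expectations. Throughout I will use Young's inequality $\|a+b\|^2 \le (1+\gamma)\|a\|^2 + (1+\gamma^{-1})\|b\|^2$ with suitably chosen parameters (the free parameter $\beta$ appearing in (b) and (c) is exactly such a Young's-inequality weight), and the elementary fact that $\mathbf{x}^{k+1}-\bar{\mathbf{x}}^{k+1} = (W\otimes I_d)(\mathbf{x}^k - \bar{\mathbf{x}}^k) - \eta(W\otimes I_d)(\mathbf{y}^k - \bar{\mathbf{v}}^k)$, which follows from \eqref{bareq} together with the doubly-stochastic property of $W$.

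For part (a) I would write $\mathbf{x}^{k+1}-\bar{\mathbf{x}}^{k+1} = (W\otimes I_d)\big((\mathbf{x}^k-\bar{\mathbf{x}}^k) - \eta(\mathbf{y}^k-\bar{\mathbf{v}}^k)\big)$, apply Lemma \ref{W_lemma} to pull out the factor $\rho$, and then split the square with the crude Young bound $\|u-v\|^2 \le 2\|u\|^2 + 2\|v\|^2$; taking expectations gives \eqref{xsubxbareq} directly. For part (b), I would condition on $\mathcal{F}^{k+1}$: with probability $P$ we have $\tau_i^{k+1} = x_i^{k+1}$, contributing $\|\bar{\mathbf{x}}^{k+1}-\mathbf{x}^{k+1}\|^2$, which is bounded using part (a)'s estimate expanded around $\bar{\mathbf{x}}^k$; with probability $1-P$ we have $\tau_i^{k+1} = \tau_i^k$, contributing $\|\bar{\mathbf{x}}^{k+1}-\tau^k\|^2$, which I expand via $\|\bar{\mathbf{x}}^{k+1}-\tau^k\|^2 = \|\bar{\mathbf{x}}^k - \tau^k - \eta\bar{\mathbf{y}}^k\|^2 \le (1+\eta\beta)\|\bar{\mathbf{x}}^k-\tau^k\|^2 + (1+\tfrac{1}{\eta\beta})\eta^2\|\bar{\mathbf{y}}^k\|^2$, then use $\bar{\mathbf{y}}^k = \bar{\mathbf{v}}^k$ (from \eqref{bareq}) and Proposition \ref{vnablaf}'s bound \eqref{vcondi} on $\mathbb{E}\|\bar v^k\|^2$ to produce the $\|\nabla f(\bar x^k)\|^2$, $\|\mathbf{x}^k-\bar{\mathbf{x}}^k\|^2$, and $\|\tau^k-\bar{\mathbf{x}}^k\|^2$ terms with the stated coefficients. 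The grouping of the $(\eta^2 + \tfrac{\eta}{\beta})$ factors is just careful bookkeeping of these two branches.

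For part (c), I would start from $\mathbf{y}^{k+1}-\bar{\mathbf{v}}^{k+1} = (W\otimes I_d)\big((\mathbf{y}^k - \bar{\mathbf{v}}^k) + (\mathbf{v}^{k+1}-\bar{\mathbf{v}}^{k+1}) - (\mathbf{v}^k - \bar{\mathbf{v}}^k)\big)$ — using that $W$ is doubly stochastic so the averaging operator commutes with $(I - \tfrac{1}{n}\mathbf{1}\mathbf{1}^T)\otimes I_d$ — apply Lemma \ref{W_lemma}, and then Young-split into the "old tracking error" term $\|\mathbf{y}^k - \bar{\mathbf{v}}^k\|^2$ and the "gradient-estimator increment" term $\|(\mathbf{v}^{k+1}-\mathbf{v}^k) - (\bar{\mathbf{v}}^{k+1}-\bar{\mathbf{v}}^k)\|^2 \le \|\mathbf{v}^{k+1}-\mathbf{v}^k\|^2$. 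The increment $\|\mathbf{v}^{k+1}-\mathbf{v}^k\|^2$ is then expanded and bounded by $L$-smoothness in terms of $\|\mathbf{x}^{k+1}-\mathbf{x}^k\|^2$ and $\|\tau^{k+1}-\tau^k\|^2$ (the latter again conditioned on the Bernoulli draw, so only the $1-P$ branch contributes a nonzero $\tau$ increment), and $\|\mathbf{x}^{k+1}-\mathbf{x}^k\|^2$ is further decomposed into consensus-error and $\eta\mathbf{y}$ contributions; then parts (a), (b) and Proposition \ref{vnablaf} are substituted to close everything in terms of the four quantities on the right-hand side. The main obstacle will be keeping the constant-tracking in part (c) manageable: the $16\rho^2 L^2(\cdots)$ and $48\eta^2 L^2\rho^4$, $32 P\eta^2 L^2\rho^4$ coefficients come from a cascade of Young inequalities and smoothness bounds layered on top of each other, so the challenge is choosing the intermediate Young parameters consistently (several of them fixed, only $\beta$ left free) so that the bounds collapse to exactly the stated form rather than something looser — this is where an error in a constant would propagate into the definition of $\bar\eta$ and the admissible range of $P$.
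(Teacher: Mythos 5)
Your treatments of parts (a) and (b) coincide with the paper's argument: (a) is exactly Lemma \ref{W_lemma} followed by the crude Young split, and in (b) your weights $(1+\eta\beta)$ and $(1+\tfrac{1}{\eta\beta})\eta^2=\eta^2+\tfrac{\eta}{\beta}$ reproduce the paper's constants, with the Bernoulli conditioning giving the exact identity $\mathbb{E}[\|\bar{x}^{k+1}-\tau_i^{k+1}\|^2\mid\mathcal{F}^{k+1}]=P\|\bar{x}^{k+1}-x_i^{k+1}\|^2+(1-P)\|\bar{x}^{k+1}-\tau_i^k\|^2$ and \eqref{vcondi} supplying the $\|\nabla f(\bar{x}^k)\|^2$ term.

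The gap is in part (c), in how you treat $\|\mathbf{v}^{k+1}-\mathbf{v}^k\|^2$. You claim it is bounded by $L$-smoothness in terms of $\|\mathbf{x}^{k+1}-\mathbf{x}^k\|^2$ and $\|\tau^{k+1}-\tau^k\|^2$; this is false, because $v_i^{k+1}-v_i^k$ involves $\nabla f_{i,s_i^{k+1}}$ and $\nabla f_{i,s_i^k}$ with freshly re-sampled, generally different indices. Even if $x_i^{k+1}=x_i^k$ and $\tau_i^{k+1}=\tau_i^k$, the increment $[\nabla f_{i,s_i^{k+1}}(x_i^k)-\nabla f_{i,s_i^{k+1}}(\tau_i^k)]-[\nabla f_{i,s_i^k}(x_i^k)-\nabla f_{i,s_i^k}(\tau_i^k)]$ is nonzero in general, whereas your bound would force it to vanish; a deterministic smoothness bound only yields $\|v_i^{k+1}-v_i^k\|\le L(\|x_i^{k+1}-\tau_i^{k+1}\|+\|x_i^k-\tau_i^k\|+\|\tau_i^{k+1}-\tau_i^k\|)$, which does not lead to the constants in \eqref{ysub}. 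Your parenthetical is also reversed: the nonzero $\tau$-increment occurs on the probability-$P$ branch ($l_i^{k+1}=1$, so $\tau_i^{k+1}=x_i^{k+1}$), while the $1-P$ branch gives $\tau_i^{k+1}=\tau_i^k$. The paper instead adds and subtracts the full gradients, $\mathbb{E}\|\mathbf{v}^{k+1}-\mathbf{v}^k\|^2\le 2L^2\mathbb{E}\|\mathbf{x}^{k+1}-\mathbf{x}^k\|^2+4\mathbb{E}\|\mathbf{v}^{k+1}-\nabla f(\mathbf{x}^{k+1})\|^2+4\mathbb{E}\|\mathbf{v}^k-\nabla f(\mathbf{x}^k)\|^2$, controls the two estimator-error terms by Proposition \ref{vnablaf}(a) (the conditional-variance bound), which produces $8L^2$ times the consensus errors at times $k$ and $k+1$, and then substitutes \eqref{xsubxbareq} and \eqref{tausubxbar} to push the $(k+1)$-quantities back to time $k$; this is exactly what generates the $16\rho^2L^2$, $48\eta^2L^2\rho^4$ and $32P\eta^2L^2\rho^4$ coefficients. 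Without that route your recursion will not close to the stated constants, which are load-bearing for the subsequent definitions of $C_1,C_2,C_3$ and the admissible ranges of $\eta$ and $P$.
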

\begin{proof}
	See Appendix \ref{x_proof}.
\end{proof}
\par  With \eqref{xsubxbareq}-\eqref{ysub} in Proposition \ref{xsub} bounded in terms of linear combinations of their past values, we will establish a linear system of inequalities to bound $\sum_{s=1}^k \mathbb{E}[\|{\bf x}^s-{\bf \bar{x}}^s\|^2]$, $\sum_{s=1}^k \mathbb{E}[\|{\bf \tau}^s-{\bf \bar{x}}^s\|^2]$ and
$\sum_{s=1}^k \mathbb{E}[\|{\bf y}^s-{\bf \bar{v}}^s\|^2]$ by recursion.
Before that, we introduce one lemma about the spectral radius of a non-negative matrix, whose proof is in \cite[Theorem 8.3.2]{spectral_book}
\begin{lemma}\label{normA}
	Let $A\in \mathbb{R}^{d\times d}$ be non-negative and $x\in \mathbb{R}^d$ be positive. If $Ax\leq \Theta x$ for $\Theta>0$, then the spectral radius of $A$ satisfies ${d}(A)\leq \Theta$.
\end{lemma}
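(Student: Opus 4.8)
The plan is to prove the bound by producing a vector norm on $\mathbb{R}^d$ (extended to $\mathbb{C}^d$ to accommodate complex eigenvalues) under which the operator norm induced by $A$ is at most $\Theta$. Since the spectral radius of any matrix is dominated by every induced operator norm, this immediately gives $d(A)\leq \Theta$. The natural candidate, dictated by the hypothesis, is the weighted max-norm built from the positive vector $x$.

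Concretely, I would proceed in three short steps. First, because $x$ is entrywise strictly positive, set $\|z\|_x := \max_{1\leq i\leq d}|z_i|/x_i$; this is a genuine norm (homogeneity and the triangle inequality are inherited from the ordinary max-norm, and $\|z\|_x=0$ forces $z=0$ precisely because each $x_i>0$). Second, estimate the effect of $A$: fixing $z$ and writing $c=\|z\|_x$, so $|z_j|\leq c\,x_j$ for all $j$, non-negativity of $A$ gives
\[
|(Az)_i|=\Big|\sum_{j=1}^d A_{ij}z_j\Big|\leq \sum_{j=1}^d A_{ij}|z_j|\leq c\sum_{j=1}^d A_{ij}x_j=c\,(Ax)_i\leq c\,\Theta\,x_i,
\]
where the final inequality is exactly the assumption $Ax\leq\Theta x$. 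Dividing by $x_i$ and maximizing over $i$ yields $\|Az\|_x\leq \Theta\|z\|_x$, i.e.\ the induced operator norm satisfies $\|A\|_x\leq\Theta$. Third, for any eigenvalue $\lambda$ of $A$ with eigenvector $w\neq 0$ we get $|\lambda|\,\|w\|_x=\|Aw\|_x\leq \Theta\,\|w\|_x$, hence $|\lambda|\leq\Theta$; taking the maximum over the spectrum gives $d(A)\leq\Theta$.

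There is essentially no hard step here — this is classical Perron--Frobenius-type reasoning (cf.\ \cite[Theorem 8.3.2]{spectral_book}); the only point deserving care is that the strict positivity of $x$ is exactly what makes $\|\cdot\|_x$ a bona fide norm, so the argument would collapse if any coordinate of $x$ vanished. If one prefers to avoid introducing a norm, an alternative route is to iterate the hypothesis: non-negativity of $A$ gives $A^k x\leq \Theta^k x$ for all $k\geq 1$ by induction, whence $\|A^k\|_\infty\leq (x_{\max}/x_{\min})\,\Theta^k$, and Gelfand's formula $d(A)=\lim_{k\to\infty}\|A^k\|^{1/k}$ finishes the proof once the norm-equivalence constants are absorbed in the limit.
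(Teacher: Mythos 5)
Your proof is correct. The paper does not prove this lemma at all --- it simply cites \cite[Theorem 8.3.2]{spectral_book} --- and your weighted max-norm argument (with the correct observation that strict positivity of $x$ is what makes $\|\cdot\|_x$ a norm, and that the bound $|(Az)_i|\leq \sum_j A_{ij}|z_j|$ extends to complex $z$ so that all eigenvalues are covered) is exactly the standard textbook proof of that cited result; the Gelfand-formula alternative you sketch is also valid.
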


\begin{proposition}
	Suppose Assumption \ref{W_assump} hold. Suppose  $\rho^2<\frac{1}{3}$, $1-\frac{3\rho^2}{(1+\frac{1}{\rho})\frac{2}{9}+1+\rho}<P<1$ and 
	$$0<\eta <\min \Big\{ \frac{1}{6L},
	\sqrt{\frac{1-(\frac{4}{3}+\frac{8}{9} P)\rho^2}{2\mathrm{T}}} \Big\},$$
	where $\mathrm{T} \triangleq \frac{C_2+C_3\varepsilon_3}{\rho^2}\in \mathbb{R}^+$ and $\varepsilon_3\in \mathbb{R}^+$. 
	We have
	\begin{align}
		&\max\{\sum_{s=1}^{k}\mathbb{E}\|{\bf y}^{s}-{\bf \bar{v}}^{s}\|^2,\sum_{s=1}^{k}\mathbb{E}\|{\bf x}^{s}-{\bf \bar{x}}^{s}\|^2, \sum_{s=1}^{k} \mathbb{E}\|\tau^{s}-{\bf \bar{x}}^{s}\|^2\}\notag\\
		\leq & \frac{3\rho^2 R_0}{1-3\rho^2}+\frac{C_4+C_4''}{1-3\rho^2}\sum_{s=1}^{k}\mathbb{E}\|\nabla f(\bar{x}^{s})\|^2,
	\end{align}
	where $R_0=\mathbb{E}\|{\bf y}^1-{\bf \bar{v}}^1\|^2+\mathbb{E}\|{\bf x}^1-{\bf \bar{x}}^1\|^2+\mathbb{E}\|\tau^1-{\bf \bar{x}}^1\|^2$, $C_4$ and $C_4''$ are defined after \eqref{lmi_ineq}.
\end{proposition}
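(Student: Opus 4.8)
The plan is to read the three one-step bounds of Proposition~\ref{xsub} as a single entrywise vector recursion, sum it, and then pass the resolvent $(I-A)^{-1}$ to the right-hand side, controlling that resolvent through the spectral-radius estimate of Lemma~\ref{normA}. Write $\mathbf{u}^k=\bigl(\mathbb{E}\|\mathbf{x}^k-\bar{\mathbf{x}}^k\|^2,\ \mathbb{E}\|\tau^k-\bar{\mathbf{x}}^k\|^2,\ \mathbb{E}\|\mathbf{y}^k-\bar{\mathbf{v}}^k\|^2\bigr)^{\!\top}$ and $g^k=\mathbb{E}\|\nabla f(\bar x^k)\|^2$. After fixing the free constant $\beta$ in \eqref{tausubxbar}--\eqref{ysub} (for instance $\beta=1/\eta$, so $\eta^2+\eta/\beta=2\eta^2$ and $1+\eta\beta=2$) and collecting coefficients from \eqref{xsubxbareq}, \eqref{tausubxbar}, \eqref{ysub}, parts (a)--(c) are exactly
\begin{equation}\label{lmi_ineq}
\mathbf{u}^{k+1}\ \le\ A\,\mathbf{u}^{k}+g^{k}\,\mathbf{b}
\end{equation}
componentwise, where $A\in\mathbb{R}^{3\times3}$ and $\mathbf{b}\in\mathbb{R}^{3}$ are entrywise non-negative and depend only on $L,\rho,P,\eta$ (note $A_{12}=0$, since the $\tau$-error does not feed into the $\mathbf{x}$-error in \eqref{xsubxbareq}). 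Here $C_2,C_3$ are the $\eta$-independent parts of the leading entries in the third row of $A$ (the row coming from \eqref{ysub}), and $C_4,C_4''$ are the weighted entries of $\mathbf{b}$; all of these get written out once $A$ and $\mathbf{b}$ are displayed. Summing \eqref{lmi_ineq} over $s=1,\dots,k$ and using $\mathbf{u}^{k+1}\ge0$ gives $(I-A)\sum_{s=1}^{k}\mathbf{u}^{s}\le\mathbf{u}^{1}+\bigl(\sum_{s=1}^{k}g^{s}\bigr)\mathbf{b}$, so the statement reduces to showing that $I-A$ is invertible with a non-negative inverse and to comparing $\mathbf{u}^1$ and $\mathbf{b}$ with multiples of a fixed positive vector.

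For the spectral bound I would apply Lemma~\ref{normA} with a positive test vector $\mathbf{v}=(1,v_2,v_3)^{\!\top}$ --- $v_2$ a constant whose form mirrors the $(1+\tfrac1\rho)$ in the hypothesis on $P$, and $v_3$ a constant multiple of the free parameter $\varepsilon_3$ --- and verify $A\mathbf{v}\le 3\rho^{2}\mathbf{v}$ componentwise, so that $d(A)\le 3\rho^2<1$ (this is where $\rho^2<\tfrac13$ enters). The three scalar checks are: from \eqref{xsubxbareq}, $2\rho^2+2\rho^2\eta^2v_3\le 3\rho^2$, i.e.\ $\eta$ small; from \eqref{tausubxbar}, after collecting the $(1-P)(1+\eta\beta)v_2$ term, an inequality of the form $2\rho^2P+(1-P)\,c(\rho)\,v_2+O(\eta^2)\le 3\rho^2 v_2$, which is what the lower bound $P>1-\tfrac{3\rho^{2}}{(1+\frac1\rho)\frac29+1+\rho}$ is tuned to supply; and from \eqref{ysub}, an inequality of the form $2\rho^2\bigl(1+O(\eta^2)\bigr)+16\rho^2L^2\bigl(1+(2+2P)\rho^2+O(\eta^2)\bigr)+16\rho^2L^2(1-P)v_2\le 3\rho^2 v_3$. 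In the last one I would absorb the pure $\eta^2L^2$ terms using $\eta<\tfrac1{6L}$ (hence $\eta^2L^2\le\tfrac1{36}$) and the $\rho^4$ cross terms accordingly, leaving a residual constraint of the form $2\mathrm{T}\eta^{2}\le 1-(\tfrac43+\tfrac89P)\rho^2$, i.e.\ $\eta<\sqrt{(1-(\tfrac43+\tfrac89P)\rho^2)/(2\mathrm{T})}$ with $\mathrm{T}=(C_2+C_3\varepsilon_3)/\rho^2$; together with $\eta<\tfrac1{6L}$ this is exactly the hypothesis on $\eta$. I expect this third check to be the main obstacle: one has to disentangle the many $\eta^2L^2$, $\rho^2$, $\rho^4$, $P$ and $(1-P)$ terms in \eqref{ysub} and show they collapse precisely into the $\bar{\eta}$-type bound and into the definitions of $C_2,C_3$; the other two checks are routine.

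Once $d(A)\le 3\rho^2<1$, the Neumann series gives $(I-A)^{-1}=\sum_{j\ge0}A^{j}\ge0$ entrywise and $(I-A)^{-1}\mathbf{v}\le\sum_{j\ge0}(3\rho^2)^{j}\mathbf{v}=\tfrac1{1-3\rho^2}\mathbf{v}$. Multiplying $(I-A)\sum_{s}\mathbf{u}^{s}\le\mathbf{u}^1+(\sum_s g^s)\mathbf{b}$ by $(I-A)^{-1}$ and using $(I-A)^{-1}=I+(I-A)^{-1}A$ together with $A\mathbf{1}\le 3\rho^2\mathbf{1}$ and $\mathbf{u}^1\le R_0\mathbf{1}$, the initial-data contribution is bounded coordinatewise by $\mathbf{u}^1+\tfrac{3\rho^2R_0}{1-3\rho^2}\mathbf{1}$, which supplies the $\tfrac{3\rho^2R_0}{1-3\rho^2}$ term of the statement; and from the definitions of $C_4,C_4''$ one has $\mathbf{b}\le(C_4+C_4'')\mathbf{v}$ (normalizing $\mathbf{v}\ge\mathbf{1}$), so the gradient part is bounded by $\tfrac{C_4+C_4''}{1-3\rho^2}(\sum_s g^s)\mathbf{v}$. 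Taking the maximum over the three coordinates and recalling $\sum_s g^s=\sum_{s=1}^k\mathbb{E}\|\nabla f(\bar x^s)\|^2$ then yields the claimed inequality. Apart from Proposition~\ref{xsub} and Lemma~\ref{normA}, the only facts used are the three scalar inequalities above and the positivity $1-3\rho^2>0$, $\beta>0$, $\varepsilon_3>0$.
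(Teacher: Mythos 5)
Your overall strategy is the same as the paper's: stack the three bounds of Proposition \ref{xsub} into a non-negative $3\times 3$ recursion, invoke Lemma \ref{normA} with a positive test vector to get spectral radius at most $3\rho^2<1$, and sum the resulting geometric series. However, two concrete steps fail as written. First, the free parameter must be $\beta=\rho/\eta$, not $\beta=1/\eta$. The proposition's hypotheses and constants are calibrated to $\eta\beta=\rho$, $\eta/\beta=\eta^2/\rho$: with $\eta L\le\frac16$ this makes the $\tau$-row self-coefficient equal $C_1''=(1-P)\bigl(\tfrac29(1+\tfrac1\rho)+1+\rho\bigr)$, and the stated lower bound on $P$ is exactly the condition $C_1''<3\rho^2$ that allows a positive $\tau$-weight $\varepsilon_3$ to exist; likewise the $(1+\tfrac1\rho)$ and $(1+\rho)$ factors inside $C_2,C_3,C_4,C_4''$ (hence $\varepsilon_3$, $\mathrm T$ and the step-size threshold) all come from this choice. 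With $\beta=1/\eta$ the $\tau$-row self-coefficient becomes $(1-P)(2+16\eta^2L^2)$, which can be as large as $\tfrac{22}{9}(1-P)$; since $\min_{\rho\in(0,1)}\bigl(\tfrac29(1+\tfrac1\rho)+1+\rho\bigr)\approx 2.17<\tfrac{22}{9}$ (attained near $\rho=\sqrt2/3$, which satisfies $\rho^2<\tfrac13$), the stated hypothesis on $P$ does not guarantee this diagonal entry stays below $3\rho^2$ when $\eta L$ is near $\tfrac16$ (which the stated step-size range permits), and then no positive test vector can certify $d(A)\le3\rho^2$; in any case you cannot reproduce the specific constants $C_2,C_3,C_4,C_4''$ that appear in the claimed bound. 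Relatedly, the weights are misassigned: in the paper's test vector $\varepsilon=[\tfrac1{2\eta^2},1,\varepsilon_3]^\top$ the $\mathbf y$-coordinate carries weight of order $\eta^{-2}$ (that row's check is precisely what produces the threshold $\eta^2\le\frac{1-(\frac43+\frac89P)\rho^2}{2\mathrm T}$), not a constant multiple of $\varepsilon_3$.

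Second, the final bookkeeping relies on $A\mathbf 1\le3\rho^2\mathbf 1$, which is false: the row coming from \eqref{ysub} has entries of order $\rho^2L^2$ (e.g.\ $C_2=16\rho^2L^2+\cdots$), which exceed $3\rho^2$ unless $L$ is small, so only the weighted comparison $A\varepsilon\le3\rho^2\varepsilon$ is available. If you push your resolvent argument through $\varepsilon$ instead of $\mathbf 1$, then $(I-A)^{-1}\mathbf u^1\le\frac{R_0}{1-3\rho^2}\varepsilon$ and $(I-A)^{-1}\mathbf b\le\frac{C_4+C_4''}{1-3\rho^2}\varepsilon$, and taking the maximum over the three coordinates leaks the factor $\max_i\varepsilon_i\sim\eta^{-2}$ into the final constants, so you do not arrive at the stated bound $\frac{3\rho^2R_0}{1-3\rho^2}+\frac{C_4+C_4''}{1-3\rho^2}\sum_{s}\mathbb E\|\nabla f(\bar x^{s})\|^2$. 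The paper gets those constants by iterating \eqref{lmi_ineq} to \eqref{conv_matrix2}, passing to Euclidean norms with rate $d(C)^k\le(3\rho^2)^k$ as in \eqref{max_eq}, and then using the summation identity \eqref{theta_s}. Your summed inequality $(I-A)\sum_{s=1}^k\mathbf u^s\le\mathbf u^1+(\sum_{s=1}^k g^s)\mathbf b$ is a clean starting point, but to land on the proposition's constants you need to replace the false comparison with $\mathbf 1$ by the norm/rate argument (or otherwise track the weights explicitly), in addition to fixing $\beta=\rho/\eta$.
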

\begin{proof}
	\par Let $\beta=\frac{\rho}{\eta}$ and $\eta L\leq \frac{1}{6}$. With Lemma \ref{xsub}, we have
	\begin{align}\label{lmi_ineq}
		u_{k+1} 
		\!\leq\! \underbrace{\!\begin{bmatrix}C_1& C_2&C_3\\2\rho^2\eta^2& 2\rho^2& 0\\2\rho^2\eta^2P& C_2'' & C_1''\end{bmatrix}\!}_{C}\! u_k\!+\!\begin{bmatrix}C_4 \\ 0 \\ C_4'' \end{bmatrix} \!\mathbb{E}\|\nabla f(\bar{x}^k)\|^2,
	\end{align}
where $u_k\triangleq \begin{bmatrix}\mathbb{E}\|{\bf y}^{k}-{\bf \bar{v}}^{k}\|^2\\ \mathbb{E}\|{\bf x}^{k}-{\bf \bar{x}}^{k}\|^2\\ \mathbb{E}\|\tau^{k}-{\bf \bar{x}}^{k}\|^2\end{bmatrix}$, $C_1=2\rho^2+\frac{12+8P}{9}\rho^4 $, $C_2=16\rho^2L^2+\frac{8\rho+16(\rho+1) (1-P)}{3\rho}\rho^2L^2+ (32+32P )L^2\rho^4$, $C_3=\frac{16}{9}\big(1+\frac{9\rho^2+11\rho+2}{\rho}(1-P)\big)\rho^2L^2 $, $C_2''=2\rho^2 P +\frac{1}{3} (1-P) (1+\frac{1}{\rho})$, $C_1''=(1-P)\big(\frac{2}{9} (1+\frac{1}{\rho})+1+\rho \big)$
and $C_4=16n \rho^2 \eta^2 L^2+32 n(1-P) \rho^2 \eta^2 L^2(1+\frac{1}{\rho})$ and $C_4''=2 \eta^2 n (1-P)(1+\frac{1}{\rho})$
	\par By induction of \eqref{lmi_ineq}, it leads to
		\begin{align}\label{conv_matrix2}
			u_{k+1}\leq C^k u_1+\sum_{s=0}^{k-1} C^s\begin{bmatrix}C_4 \\ 0 \\ C_4''\end{bmatrix} \mathbb{E}\|\nabla f(\bar{x}^{k-s})\|^2
		\end{align}
		If the spectral radius of $C$ satisfies ${d}(C)<1$, then $C^k$ converges to zero at the linear rate $\mathcal{O}({d}(C)^k)$\cite{spectral_book}. Hence, we next prove $d(C)<1$ by Lemma \ref{normA} so as to prove the result in this proposition. Because $1-\frac{3\rho^2}{(1+\frac{1}{\rho})\frac{2}{9}+1+\rho}<P<1$, we have $\varepsilon_3\triangleq\frac{3\rho^2 P +\frac{1}{3}(1-P)(1+\frac{1}{\rho})}{3\rho^2-(1-P)\big(\frac{2}{9}(1+\frac{1}{\rho})+1+\rho \big)}>0$. Define a positive vector $\varepsilon=[\frac{1}{2\eta^2},1,\varepsilon_3]^T$. Then, with $0<\eta <\min \Big\{ \frac{1}{6L},
		\sqrt{\frac{1-(\frac{4}{3}+\frac{8}{9} P)\rho^2}{2\mathrm{T}}} \Big\}$ and $\rho^2<\frac{1}{3}$, the non-negative matrix $C$ satisfies $C\varepsilon\leq 3\rho^2 \varepsilon$, i.e.
	\begin{subequations}
		\begin{align}
			& (2\rho^2+\frac{12+8P}{9}\rho^4)\frac{1}{2\eta^2}  +C_2 +C_3\varepsilon_3 \leq  \frac{3 \rho^2}{2\eta^2},\label{v1}\\
			&2\rho^2 \eta^2 \frac{1}{2\eta^2} +2\rho^2 = 3\rho^2, \label{v2}\\
			& 2\rho^2 \eta^2 P\frac{1}{2\eta^2}+ ( 2\rho^2 P+\frac{1}{3}(1-P)(1+\frac{1}{\rho})) \notag\\
			&+ (1-P)\big(\frac{2}{9} (1+\frac{1}{\rho})+1+\rho \big) \varepsilon_3= 3\rho^2 \varepsilon_3.\label{v3}
		\end{align}
	\end{subequations}
Therefore, by Lemma \ref{normA}, we obtain $d(C)\leq 3\rho^2$.
	\par By \eqref{conv_matrix2}, we obtain the following bound
		
		\begin{align*}
			\|u_{k+1}\|\leq {d}(C)^k
			\|u_1\|\!+\!\sum_{s=0}^{k-1} {d}(C)^s\bigg\|\begin{bmatrix}C_4 \\ 0 \\ C_4'' \end{bmatrix}\bigg\|\mathbb{E}\|\nabla f(\bar{x}^{k-s})\|^2,
		\end{align*}
		and consequently,
		
			\begin{align}\label{max_eq}
				\max&\Big\{\!\mathbb{E}\|{\bf y}^{k+1}\!-\!{\bf \bar{v}}^{k+1}\|^2,\!\mathbb{E}\|{\bf x}^{k+1}\!-\!{\bf \bar{x}}^{k+1}\|^2,\! \mathbb{E}\|\tau^{k+1}\!-\!{\bf \bar{x}}^{k+1}\|^2\!\Big\}\notag\\
				\leq& (3\rho^2)^k(\mathbb{E}\|{\bf y}^1-{\bf \bar{v}}^1\|^2+\mathbb{E}\|{\bf x}^1-{\bf \bar{x}}^1\|^2+\mathbb{E}\|\tau^1-{\bf \bar{x}}^1\|^2)\notag\\
				&+\sum_{s=0}^{k-1}(3\rho^2)^s(C_4+C_4'') \mathbb{E}\|\nabla f(\bar{x}^{k-s})\|^2,
			\end{align}
	where we used the fact that $\max\{a,b\}\leq \sqrt{a^2+b^2}\leq a+b$ for any $a\geq 0$ and $b\geq 0$.
\par	In addition, due to $3\rho^2<1$, we have
	\begin{align}\label{theta_s}
		\sum_{l=1}^{k}\sum_{s=0}^{l-1}(3\rho^2)^sa_{l-s}=&\sum_{l=1}^{k}\sum_{s=1}^{l}(3\rho^2)^{l-s}a_s\notag\\
		=&\sum_{s=1}^{k}a_s\sum_{l=s}^{k}(3\rho^2)^{l-s}\notag\\
		\leq &\frac{1}{1-3\rho^2}\sum_{s=1}^{k}a_s,
	\end{align}
 for any non-negative sequence $(a_s)_{s\in \mathbb{Z}^+}$.
	Then, by summing \eqref{max_eq} over iterations and \eqref{theta_s}, we obtain
	\begin{align}\label{sum_term}
		\max&\Big\{\sum_{s=1}^{k}\mathbb{E}\|{\bf y}^{s}-{\bf \bar{v}}^{s}\|^2,\sum_{s=1}^{k}\mathbb{E}\|{\bf x}^{s}-{\bf \bar{x}}^{s}\|^2, \sum_{s=1}^{k} \mathbb{E}\|\tau^{s}-{\bf \bar{x}}^{s}\|^2\Big\}\notag\\
		\leq & \frac{3\rho^2 R_0}{1-3\rho^2}+\frac{C_4+C_4''}{1-3\rho^2}\sum_{s=1}^{k}\mathbb{E}\|\nabla f(\bar{x}^{s})\|^2,
	\end{align}
	where $R_0=\mathbb{E}\|{\bf y}^1-{\bf \bar{v}}^1\|^2+\mathbb{E}\|{\bf x}^1-{\bf \bar{x}}^1\|^2+\mathbb{E}\|\tau^1-{\bf \bar{x}}^1\|^2$.
\end{proof}

In next proposition, we show the evolution of the cost function.
\begin{proposition}\label{f_propo}
	Suppose Assumption \ref{W_assump} hold. If the step-size satisfies $\eta\leq \frac{1}{6L}$,
	\begin{align}\label{nabla_c}
		& \mathbb{E} f(\bar{x}^{k+1})\notag\\
		\leq &\mathbb{E} f(\bar{x}^k)-\frac{\eta}{3}\mathbb{E} \|\nabla f(\bar{x}^k)\|^2\notag\\
		&+\frac{2L}{3n} \mathbb{E}\|{\bf x}^k-{\bf \bar{x}}^k\|^2+\frac{4L}{9n}\mathbb{E}\|\tau^k-{\bf \bar{x}} ^k\|^2 .
	\end{align}
\end{proposition}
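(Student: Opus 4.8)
The plan is to carry out a standard descent-lemma computation on the averaged iterate $\bar x^k$, exploiting the $L$-smoothness of $f$ (which follows from Assumption \ref{W_assump}(1)) and the averaged dynamics \eqref{bareq}, namely $\bar x^{k+1}=\bar x^k-\eta\bar y^k$ with $\bar y^k=\bar v^k$. First I would write the $L$-smoothness inequality
\begin{align*}
f(\bar x^{k+1})\leq f(\bar x^k)+\langle\nabla f(\bar x^k),\bar x^{k+1}-\bar x^k\rangle+\tfrac{L}{2}\|\bar x^{k+1}-\bar x^k\|^2,
\end{align*}
substitute $\bar x^{k+1}-\bar x^k=-\eta\bar v^k$, take expectations, and obtain
\begin{align*}
\mathbb{E}f(\bar x^{k+1})\leq \mathbb{E}f(\bar x^k)-\eta\,\mathbb{E}\langle\nabla f(\bar x^k),\bar v^k\rangle+\tfrac{L\eta^2}{2}\,\mathbb{E}\|\bar v^k\|^2.
\end{align*}

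Next I would handle the cross term. Using the identity $\langle a,b\rangle=\tfrac12\|a\|^2+\tfrac12\|b\|^2-\tfrac12\|a-b\|^2$ with $a=\nabla f(\bar x^k)$, $b=\bar v^k$, the cross term becomes $-\tfrac{\eta}{2}\mathbb{E}\|\nabla f(\bar x^k)\|^2-\tfrac{\eta}{2}\mathbb{E}\|\bar v^k\|^2+\tfrac{\eta}{2}\mathbb{E}\|\bar v^k-\nabla f(\bar x^k)\|^2$. The term $\|\bar v^k-\nabla f(\bar x^k)\|^2 = \|\tfrac1n\sum_i(v_i^k-\nabla f_i(\bar x^k))\|^2$ is exactly what \eqref{vsubfcondi} in Proposition \ref{vnablaf} bounds, giving a $6L^2\|x_i^k-\bar x^k\|^2+4L^2\|\tau_i^k-\bar x^k\|^2$ contribution averaged over $i$. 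Collecting terms, the coefficient of $\mathbb{E}\|\bar v^k\|^2$ is $\tfrac{L\eta^2}{2}-\tfrac{\eta}{2}$, which is negative for $\eta\leq\frac1{6L}$ (indeed for $\eta<\frac1L$); to produce the clean $-\tfrac{\eta}{3}\mathbb{E}\|\nabla f(\bar x^k)\|^2$ on the right-hand side I would then invoke \eqref{vcondi}, which upper bounds $-\tfrac12\mathbb{E}\|\bar v^k\|^2$ (equivalently lower bounds it) by $-\mathbb{E}\|\nabla f(\bar x^k)\|^2$ plus consensus terms; combining the $-\tfrac{\eta}{2}\mathbb{E}\|\nabla f(\bar x^k)\|^2$ with a suitably weighted piece of the negative $\|\bar v^k\|^2$ term processed through \eqref{vcondi}, and choosing $\eta\leq\frac1{6L}$ so the leftover $\|\bar v^k\|^2$ coefficient is nonpositive and can be dropped, should yield exactly $-\tfrac{\eta}{3}\mathbb{E}\|\nabla f(\bar x^k)\|^2$ together with the stated $\tfrac{2L}{3n}\mathbb{E}\|{\bf x}^k-{\bf\bar x}^k\|^2$ and $\tfrac{4L}{9n}\mathbb{E}\|\tau^k-{\bf\bar x}^k\|^2$ coefficients.

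More concretely, I expect the bookkeeping to go: from $\tfrac{L\eta^2}{2}-\tfrac{\eta}{2}\le -\tfrac{\eta}{3}$ (valid since $\eta L\le\frac16$ gives $\tfrac{L\eta^2}{2}\le\tfrac{\eta}{12}\le\tfrac{\eta}{6}$), write $-\tfrac{\eta}{3}\mathbb{E}\|\bar v^k\|^2 = -\tfrac{2\eta}{3}\cdot\tfrac12\mathbb{E}\|\bar v^k\|^2$ and apply \eqref{vcondi} to get $\le -\tfrac{2\eta}{3}\mathbb{E}\|\nabla f(\bar x^k)\|^2+\tfrac{2\eta}{3}\cdot\tfrac{2L^2}{n}\sum_i(3\|x_i^k-\bar x^k\|^2+2\|\tau_i^k-\bar x^k\|^2)$; combined with $-\tfrac{\eta}{2}\mathbb{E}\|\nabla f(\bar x^k)\|^2$ and the $+\tfrac{\eta}{2}$ times the bound \eqref{vsubfcondi}, the gradient-norm coefficient is $-\tfrac{\eta}{2}-\tfrac{2\eta}{3}+\dots$ which is more negative than $-\tfrac{\eta}{3}$, so I would only use as much of these as needed (i.e. keep $-\tfrac{\eta}{3}$ and discard the surplus, or more carefully track so the consensus coefficients come out to precisely $\tfrac{2L}{3n}$ and $\tfrac{4L}{9n}$). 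The main obstacle is purely this constant-chasing: making sure the various $L^2\eta$, $L\eta^2$ and $L$ prefactors on the consensus terms, after multiplying by $\eta\le\frac1{6L}$ where appropriate, collapse to exactly $\tfrac{2L}{3n}$ and $\tfrac{4L}{9n}$ rather than something merely of the same order; this requires being careful about which of \eqref{vsubfcondi} and \eqref{vcondi} is used for which portion of the $\|\bar v^k\|^2$ term and not double-counting. No genuinely hard inequality is needed beyond the descent lemma and Proposition \ref{vnablaf}.
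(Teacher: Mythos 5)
Your overall strategy is the same as the paper's (descent lemma on $\bar{x}^{k+1}=\bar{x}^k-\eta\bar{v}^k$ from \eqref{bareq}, then Proposition \ref{vnablaf}, then $\eta L\le\frac16$), but your ``concrete bookkeeping'' contains a step that fails. You propose to write $-\frac{\eta}{3}\mathbb{E}\|\bar{v}^k\|^2=-\frac{2\eta}{3}\cdot\frac12\mathbb{E}\|\bar{v}^k\|^2$ and then ``apply \eqref{vcondi}'' to bound this above by $-\frac{2\eta}{3}\mathbb{E}\|\nabla f(\bar{x}^k)\|^2$ plus consensus terms. That is the wrong direction: \eqref{vcondi} is an \emph{upper} bound on $\frac12\mathbb{E}\|\bar{v}^k\|^2$, so multiplying it by the negative factor $-\frac{2\eta}{3}$ reverses the inequality and only yields a \emph{lower} bound on $-\frac{\eta}{3}\mathbb{E}\|\bar{v}^k\|^2$, which is useless for upper-bounding $\mathbb{E}f(\bar{x}^{k+1})$. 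To harvest extra $-\|\nabla f(\bar{x}^k)\|^2$ from a negatively weighted $\|\bar{v}^k\|^2$ you would need a reverse bound of the form $\|\bar{v}^k\|^2\ge c\|\nabla f(\bar{x}^k)\|^2-(\text{consensus})$, which Proposition \ref{vnablaf} does not provide. This is exactly why the paper arranges the cross term via Young's inequality, producing $-\frac{\eta}{2}\|\nabla f(\bar{x}^k)\|^2$ while keeping $+\frac{\eta^2L}{2}\|\bar{v}^k\|^2$ with a \emph{positive} coefficient, so that \eqref{vcondi} is applied in the legitimate direction.

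The good news is that the invalid step is unnecessary: your exact polarization identity plus \eqref{vsubfcondi} already gives
\begin{align*}
\mathbb{E}f(\bar{x}^{k+1})\le{}& \mathbb{E}f(\bar{x}^k)-\frac{\eta}{2}\mathbb{E}\|\nabla f(\bar{x}^k)\|^2+\Big(\frac{L\eta^2}{2}-\frac{\eta}{2}\Big)\mathbb{E}\|\bar{v}^k\|^2\\
&+\frac{3\eta L^2}{n}\mathbb{E}\|{\bf x}^k-{\bf \bar{x}}^k\|^2+\frac{2\eta L^2}{n}\mathbb{E}\|\tau^k-{\bf \bar{x}}^k\|^2,
\end{align*}
and since $\frac{L\eta^2}{2}-\frac{\eta}{2}\le 0$ for $\eta L\le\frac16$, you may simply discard the $\|\bar{v}^k\|^2$ term (as your first paragraph already hints). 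Then $-\frac{\eta}{2}\le-\frac{\eta}{3}$, $3\eta L^2\le\frac{2L}{3}$ and $2\eta L^2\le\frac{4L}{9}$ (both need only $\eta L\le\frac{2}{9}$), which is precisely \eqref{nabla_c}, in fact with slightly better constants than the paper obtains. So: delete the use of \eqref{vcondi} on the negative term and your proof closes, and is marginally simpler than the paper's, which retains the $+\frac{\eta^2L}{2}\|\bar{v}^k\|^2$ term and therefore genuinely needs \eqref{vcondi} to control it.
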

\begin{proof}
	By $\bar{x}^{k+1}=\bar{x}^k-\eta \bar{v}^k$ and the $L$-smoothness of the function $f$, we have
	\begin{align*}
		f(\bar{x}^{k+1})\leq&  f(\bar{x}^k)-\eta  \langle\nabla f(\bar{x}^k), \bar{v}^k\rangle+\frac{\eta^2L}{2}\|\bar{v}^k\|^2\\
		=& f(\bar{x}^k)-\eta \|\nabla f(\bar{x}^k)\|^2+\frac{\eta^2L}{2}\|\bar{v}^k\|^2\\
		&-\eta \left<\nabla f(\bar{x}^k), \frac{1}{n}\sum_{i=1}^n (v_i^k-\nabla f_i(\bar{x}^k))\right>\\
		\leq &  f(\bar{x}^k)-\frac{\eta}{2} \|\nabla f(\bar{x}^k)\|^2+\frac{\eta^2L}{2} \|\bar{v}^k\|^2\\
		&+\frac{\eta}{2} \| \frac{1}{n}\sum_{i=1}^n (v_i^k-\nabla f_i(\bar{x}^k))\|^2
	\end{align*}
	Taking the conditional expectation with respect to $\mathcal{F}^{k+1}$, 
	\begin{align*}
		&\mathbb{E}[ f(\bar{x}^{k+1})|\mathcal{F}^{k+1}]\\
		\leq& f(\bar{x}^k)-\frac{\eta}{2} \|\nabla f(\bar{x}^k)\|^2+\frac{\eta^2L}{2}  \|\bar{v}^k\|^2\\
		&+\frac{\eta}{2} \| \frac{1}{n}\sum_{i=1}^n (v_i^k-\nabla f_i(\bar{x}^k))\|^2
	\end{align*} 
	Then, by \eqref{vsubfcondi} and \eqref{vcondi}, we have
	\begin{align}\label{fsubfstarcon}
		&\mathbb{E} [f(\bar{x}^{k+1})|\mathcal{F}^{k+1}]\notag\\
		\leq& f(\bar{x}^k)-\frac{\eta}{2} \|\nabla f(\bar{x}^k)\|^2\notag\\
		&+\eta^2L\Big( \|\nabla f(\bar{x}^k)\|^2+\frac{2L^2}{n}(3\|{\bf x}^k-{\bf \bar{x}}^k\|^2+2 \|{\bf \tau} ^k-{\bf \bar{x}}^k\|^2)\Big)\notag\\
		&+\frac{\eta}{2n}\big(6L^2 \|{\bf x}^k-{\bf \bar{x}}^k\|^2+4L^2 \|{\bf \tau} ^k-{\bf \bar{x}}^k\|^2\big)\notag\\
		=&f(\bar{x}^k)-(\!\frac{\eta}{2}-\eta^2L) \|\nabla f(\bar{x}^k)\|^2+\!\frac{3\eta L^2+6\eta^2L^3}{n} \!\|{\bf x}^k-{\bf \bar{x}}^k\|^2\notag\\
		&+\frac{4\eta^2L^3+2\eta L^2}{n} \|\tau^k-{\bf \bar{x}}^k\|^2.
	\end{align}
	Substitute $\eta L\leq \frac{1}{6}$ to the above inequality and take the total expectation on both sides of \eqref{fsubfstarcon}, we obtain \eqref{nabla_c}.
\end{proof}
\par  The following lemma will be used to establish the convergence of GT-VR.
\begin{lemma}\cite{ROBBINS1971233}\label{conv_l}
	Let ($\Omega,\mathcal{F},\mathcal{P}$) be a probability space and $(\mathcal{F}_t)_{t\in \mathbb{Z}^+}$ be a filtration. Let $U^t$, $\xi^t$ and $\zeta^t$ be nonnegative $\mathcal{F}_t$ measurable random variables for $t\in \mathbb{Z}^+$ such that 
	\begin{align}
		\mathbb{E}[U^{t+1}|\mathcal{F}_t]\leq U^t+\xi^t-\zeta^t, \quad \forall t=1,2,\cdots.
	\end{align}
	Then $U^t$ converges to a random variable and $\sum_{t=1}^{\infty} \zeta^t<+\infty$ almost surely on the event $\{\sum_{t=1}^{\infty} \xi^t<+\infty\}$.
\end{lemma}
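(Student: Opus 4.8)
The plan is to reduce the statement to the classical Doob supermartingale convergence theorem by constructing an auxiliary process that becomes a genuine supermartingale once the $\xi^t$ and $\zeta^t$ terms are compensated, and then to localize in order to recover a lower bound. First I would introduce the compensated process
\[
Z^t \triangleq U^t + \sum_{s=1}^{t-1}\zeta^s - \sum_{s=1}^{t-1}\xi^s,
\]
and check directly from the hypothesis $\mathbb{E}[U^{t+1}\mid\mathcal{F}_t]\le U^t+\xi^t-\zeta^t$ that $\mathbb{E}[Z^{t+1}\mid\mathcal{F}_t]\le Z^t$, so that $Z^t$ is an $(\mathcal{F}_t)$-supermartingale. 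This uses only that $\xi^s,\zeta^s$ are $\mathcal{F}_s$-measurable, hence their partial sums up to index $t$ are $\mathcal{F}_t$-measurable and can be pulled out of the conditional expectation.

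The difficulty is that $Z^t$ need not be bounded below on all of $\Omega$, since the term $-\sum_{s<t}\xi^s$ may diverge; thus the convergence theorem cannot be applied to $Z^t$ directly. To circumvent this I would localize. For each $m\in\mathbb{Z}^+$ define the stopping time $\nu_m \triangleq \inf\{t\ge 1:\sum_{s=1}^t\xi^s>m\}$, which is an $(\mathcal{F}_t)$-stopping time because each partial sum is $\mathcal{F}_t$-measurable. The stopped process $Z^{t\wedge\nu_m}$ remains a supermartingale, and by the very definition of $\nu_m$ one has $\sum_{s=1}^{(t\wedge\nu_m)-1}\xi^s\le m$ for every $t$. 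Combining this with $U^{t\wedge\nu_m}\ge 0$ and $\sum\zeta^s\ge 0$ yields the uniform lower bound $Z^{t\wedge\nu_m}\ge -m$, so that $Z^{t\wedge\nu_m}+m$ is a nonnegative supermartingale; Doob's theorem then gives that $Z^{t\wedge\nu_m}$ converges almost surely to a finite limit.

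Next I would transfer the localized conclusion to the target event $A=\{\sum_{s=1}^\infty\xi^s<\infty\}$. On $\{\nu_m=\infty\}=\{\sum_{s=1}^\infty\xi^s\le m\}$ we have $t\wedge\nu_m=t$ for all $t$, so $Z^t$ itself converges a.s. there, and since $A=\bigcup_m\{\nu_m=\infty\}$, $Z^t$ converges a.s. on $A$. Finally I would untangle $Z^t$: on $A$ the series $\sum_s\xi^s$ converges, hence $U^t+\sum_{s=1}^{t-1}\zeta^s=Z^t+\sum_{s=1}^{t-1}\xi^s$ converges a.s. on $A$. Because $\zeta^s\ge 0$, the partial sums $\sum_{s=1}^{t-1}\zeta^s$ are nondecreasing and dominated by a convergent sequence (using $U^t\ge 0$), so they converge, giving $\sum_{s=1}^\infty\zeta^s<+\infty$ a.s. on $A$; then $U^t$, being the difference of two a.s. convergent sequences, converges a.s. on $A$ to a finite random variable.

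I expect the main obstacle to be precisely the localization step: without stopping at $\nu_m$ the compensated supermartingale has no lower bound, so the crux is to choose stopping times that are adapted, that cap the cumulative $\xi$-mass at $m$, and that exhaust $A$ as $m\to\infty$. The remaining points, namely verifying integrability of the stopped process (a finite sum of integrable terms) and invoking the stability of the supermartingale property under optional stopping, are routine and I would treat them only briefly.
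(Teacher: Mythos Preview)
Your argument is correct and is essentially the classical Robbins--Siegmund proof: compensate to obtain a supermartingale, localize with the stopping times $\nu_m=\inf\{t:\sum_{s\le t}\xi^s>m\}$ to force a uniform lower bound, apply Doob's convergence theorem to the stopped process, and then unwind on the event $A=\{\sum_s\xi^s<\infty\}=\bigcup_m\{\nu_m=\infty\}$.

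There is nothing to compare against, however: the paper does \emph{not} prove this lemma. It is stated with a citation to \cite{ROBBINS1971233} and used as a black box in the proof of Theorem~\ref{conver_theo}. So your proposal supplies strictly more than the paper does here. One small remark: you mention verifying ``integrability of the stopped process'' at the end, but note that the lemma as stated only assumes nonnegativity and measurability, not integrability; this is not a problem since Doob's convergence theorem applies to nonnegative supermartingales without any $L^1$ assumption, so that step can simply be dropped rather than treated briefly.
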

 Now, we are ready to provide the proof of Theorem \ref{conver_theo}.
{\bf Proof of Theorem \ref{conver_theo}:}
\begin{proof}
	By proposition \ref{f_propo} and \eqref{sum_term}, we have
	\begin{align}\label{fsub}
		0\leq &f(\bar{x}^1)-f^*-\frac{\eta}{3}\sum_{s=1 }^{k}\mathbb{E}\|\nabla f(\bar{x}^{s})\|^2\notag\\
		&+(\frac{2L}{3n}\!+\!\frac{4L}{9n})(\frac{3\rho^2R_0}{1-3\rho^2}+\frac{C_4+C_4''}{1-3\rho^2}\sum_{s=1}^{k}\mathbb{E}\|\nabla f(\bar{x}^{s})\|^2)\notag\\
		=& f(\bar{x}^1)- f^*-\underbrace{(\frac{\eta}{3}-\frac{10L}{9 n}\frac{C_4+C_4''}{1-3\rho^2})}_{\mathcal{C}}\sum_{s=1}^{k}\mathbb{E}\|\nabla f(\bar{x}^{s})\|^2\notag\\
		&+\frac{10L}{9 n}\frac{3\rho^2R_0}{1-3\rho^2}\notag\\
		=&f(\bar{x}^1)-f^*-\mathcal{C}\sum_{s=1}^{k}\mathbb{E}\|\nabla f(\bar{x}^{s})\|^2+\frac{10L}{9n }\frac{3\rho^2R_0}{1-3\rho^2}.
	\end{align}
	Since
	\begin{align}\label{C_pos}
		0<\eta < \frac{(1-3\rho^2)}{\big(16\rho^2 L^2+(32\rho^2 L^2+2)(1-P)\frac{\rho+1}{\rho}\big)5L},
	\end{align}
	then,
	\begin{align*}
		\mathcal{C}=&\frac{\eta}{3}\!-\!\frac{10L}{9}\frac{1}{1\!-\!3\rho^2}\Big(16 \rho^2 \eta^2 L^2+32 (1\!-\! P) \rho^2 \eta^2 L^2(1+\frac{1}{\rho})\\
		&+2 \eta^2  (1-P)(1+\frac{1}{\rho})\Big)\\
		=& \frac{\eta}{3}\bigg(1\!-\!\frac{10L}{3}\frac{1}{1\!-\! 3\rho^2}\Big( 16 \rho^2 \eta L^2+32 (1\!-\! P) \rho^2 \eta L^2(1+\frac{1}{\rho})\\
		&+2 \eta  (1-P)(1+\frac{1}{\rho})\Big)\bigg)\\
		>&0.
	\end{align*}
	\par It follows from \eqref{fsub} that
	\begin{align}\label{nabla_conv}
		\frac{1}{k}\sum_{s=1}^{k}\mathbb{E}\|\nabla f(\bar{x}^{s})\|^2\leq \frac{1}{k\mathcal{C}}\big[ \big(f(\bar{x}^1)-f^*\big)+\frac{10L}{9 n}\frac{3\rho^2R_0}{1-3\rho^2}\big],
	\end{align} 
which implies that $\frac{1}{k}\sum_{s=1}^{k}\mathbb{E}\|\nabla f(\bar{x}^{s})\|^2\leq \mathcal{O}(\frac{1}{k})$ holds in Theorem \ref{conver_theo}.
\par Now, by  \eqref{sum_term} and \eqref{nabla_conv}, we see that
	\begin{align*}
		&\max\Big\{\frac{1}{k}\sum_{s=1}^{k}\mathbb{E}\|{\bf x}^{s}-{\bf \bar{x}}^{s}\|^2,\frac{1}{k}\sum_{s=1}^{k}\|\tau^{s}-{\bf \bar{x}}^{s}\|^2 \Big\}\notag\\
		\leq& \frac{1}{k}\!\bigg(\frac{R_0}{1\!-\!3\rho^2}+\!\frac{C_4+C_4''}{1\!-\!3\rho^2}\Big(\frac{1}{\mathcal{C}} \big(f(\bar{x}^1)-f^*\big)+\!\frac{10L}{9n\mathcal{C}}\frac{3\rho^2R_0}{1\!-\!3\rho^2}\Big)\bigg)\notag\\
		=&\frac{1}{k}\!\bigg(\frac{C_4+C_4''}{(1\!-\!3\rho^2)\mathcal{C}} \big(f(\bar{x}^1)\!-\!f^*\big)\!+\!\Big(\frac{C_4\!+\!C_4''}{1\!-\!3\rho^2}\frac{10L}{9n\mathcal{C}}+1\Big)\frac{3\rho^2R_0}{1\!-\!3\rho^2}\bigg),
	\end{align*}
which implies that $\frac{1}{k}\sum_{s=1}^{k}\mathbb{E}\|{\bf x}^{s}-{\bf \bar{x}}^{s}\|^2\leq \mathcal{O}(\frac{1}{k})$ in Theorem \ref{conver_theo} holds.
	\par In addition, by \eqref{vsubfcondi}, we have
		\begin{align*}
			&\frac{1}{k}\sum_{s=1}^{k}\mathbb{E}\|{\bf y}^{s}-\nabla f(\mathbf{\bar{x}}^{s})\|^2\notag\\
			\leq & \frac{2}{k}\sum_{s=1}^{k}(\mathbb{E}\|{\bf y}^{s}-{\bf \bar{v}}^{s}\|^2+\mathbb{E}\|{\bf \bar{v}}^{s}- \nabla f(\mathbf{\bar{x}}^{s})\|^2)\\
			\leq & \frac{2}{k}\sum_{s=1}^{k}(\mathbb{E}\|{\bf y}^{s}-{\bf \bar{v}}^{s}\|^2\!+\!{6L^2}\mathbb{E}\|{\bf x}^{s}\!-\!{\bf \bar{x}}^{s}\|^2\!+\!{4L^2}\mathbb{E}\|{\bf \tau}^{s}-{\bf \bar{x}}^{s}\|^2)\\
			\leq& \frac{1}{k}(2+{20L^2})\Big(\frac{C_4+C_4''}{(1-3\rho^2)\mathcal{C}} \big(f(\bar{x}^1)-f^*\big)\\
			&\quad+\Big(\frac{C_4+C_4''}{1-3\rho^2}\frac{10L}{9n\mathcal{C}}+1\Big)\frac{3\rho^2R_0}{1-3\rho^2}\Big),
	\end{align*}
which implies that $\frac{1}{k}\sum_{s=1}^{k}\mathbb{E}\|{\bf y}^{s}-\nabla f(\mathbf{\bar{x}}^{s})\|^2\leq \mathcal{O}(\frac{1}{k})$ in Theorem \ref{conver_theo} holds.
	Finally, by Proposition \ref{f_propo} and Lemma \ref{conv_l}, we see that $f(\bar{x}^k)$ converges and $\sum_{k=1}^{\infty} E[\|\nabla f(\bar{x}^k)\|^2]<+\infty$.
\end{proof}
\par With the convergence result in Theorem \ref{conver_theo} and the definition of $\epsilon$-accurate stationary point in Definition \ref{acc_def}, we present the analysis of Corollary \ref{conver_corol} in the following.
\par {\bf Proof of Corollary \ref{conver_corol}:}
\begin{proof}
	By definition \ref{acc_def} and \eqref{nabla_conv}, it is clear that to find an $\epsilon$-accurate stationary point, it is sufficient to find the iterations $k$ such that
	\begin{align}\label{eq1}
		&\frac{1}{k\mathcal{C}}\big[ \big(f(\bar{x}^1)-f^*\big)+\frac{10L}{9 n}\frac{3\rho^2R_0}{1-3\rho^2}\big]\leq \epsilon
	\end{align}
where $\mathcal{C}=\frac{\eta}{3}\Big(1-\frac{10L}{3}\frac{1}{1-3\rho^2}\big( 16 \rho^2 \eta L^2+32 (1-P) \rho^2 \eta L^2(1+\frac{1}{\rho})+2 \eta  (1-P)(1+\frac{1}{\rho})\big)\Big)$.
	If the step-size $\eta$ satisfies \eqref{eta_acc}, $\mathcal{C}\geq \frac{\eta}{9}$ and $\frac{1}{\eta L}\geq \frac{3\rho^2}{1-3\rho^2}$ hold. Thus, when the iteration $k$ satisfies
	\begin{align}
		\frac{9}{\eta \epsilon}\big[ \big(f(\bar{x}^1)-f^*\big)+\frac{10}{9 n}\frac{R_0}{\eta }\big]\leq k,
	\end{align}
then, the inequality \eqref{eq1} holds.
	Therefore, the iteration complexity of GT-VR is $\mathcal{O}(\frac{1}{\eta \epsilon}\big[ \big(f(\bar{x}^1)-f^*\big)+\frac{R_0}{n \eta }\big])$. 
	\par For the gradient computations, since there are $Pm_i+2$ gradient computations at each iteration of agent $i$, the number of gradient computations across all agents is the iteration complexity multiplied by $\sum_{i=1}^n (P m_i+2)$. Since $\sum_{i=1}^n m_i=M$, the computational complexity is $\mathcal{O}(\frac{PM+n}{\eta \epsilon}\big[ \big(f(\bar{x}^1)-f^*\big)+\frac{R_0}{n \eta }\big])$. 
	\par At each iteration, each agent $i$ communicate twice with its neighbors. Let $N_i$ denote the number of neighbors of agent $i$. Then, the communication complexity across all agents is $\mathcal{O}(\frac{\sum_{i=1}^n N_i}{\eta \epsilon}\big[ \big(f(\bar{x}^1)-f^*\big)+\frac{R_0}{n \eta }\big])$ by multiplying the iteration complexity by $\sum_{i=1}^n N_i$. 
\end{proof}

\section{Simulation}\label{simulation}
To verify the efficacy of the proposed algorithm, we consider the classical binary classification problem, which is to find one optimal predictor $x\in \mathbb{R}^d$ on a popular logistic regression learning model. We compare the proposed algorithm with recently proposed algorithms GT-SAGA\cite{xin2020fast}, GT-SARAH\cite{xin2020nearoptimal} and D-GET\cite{sun2019improving}. The learning model is to optimize the following problem
\begin{align}\label{simu_pro}
	\min_{x\in \mathbb{R}^d} f(x)&\triangleq\frac{1}{n} \sum_{i=1}^n f_i(x),\notag\\ 
	f_i(x)&=\frac{1}{m_i} \sum_{j=1}^{m_i} \frac{1}{1+\exp(l_{ij}a_{ij}'x)}+\lambda_1\|x\|_2^2,
\end{align}
where $a_{ij}\in \mathbb{R}^d$, $l_{ij}\in \{-1,1\}$ and $\{a_{ij},l_{ij}\}_{j=1}^{m_i}$ denotes the set of training samples of agent $i$. 
\begin{table}[!htbp]
	\caption{Real data for black-box binary classification}\label{real_data}
	\centering
	\begin{tabular}{c|c|c|c}
		\hline
		datasets & \#samples & \#features & \#classes  \\
		\hline
		$a9a$  & 32561& 123 &2\\
		\hline
		w8a & 64700&300&2\\
		\hline
		$covtype.binary$&581012&54&2\\
		\hline
	\end{tabular}
\end{table}
\begin{figure}
	\centering
	\subfigure[a9a dataset]{
		\includegraphics[width=8 cm, height = 4.7 cm]{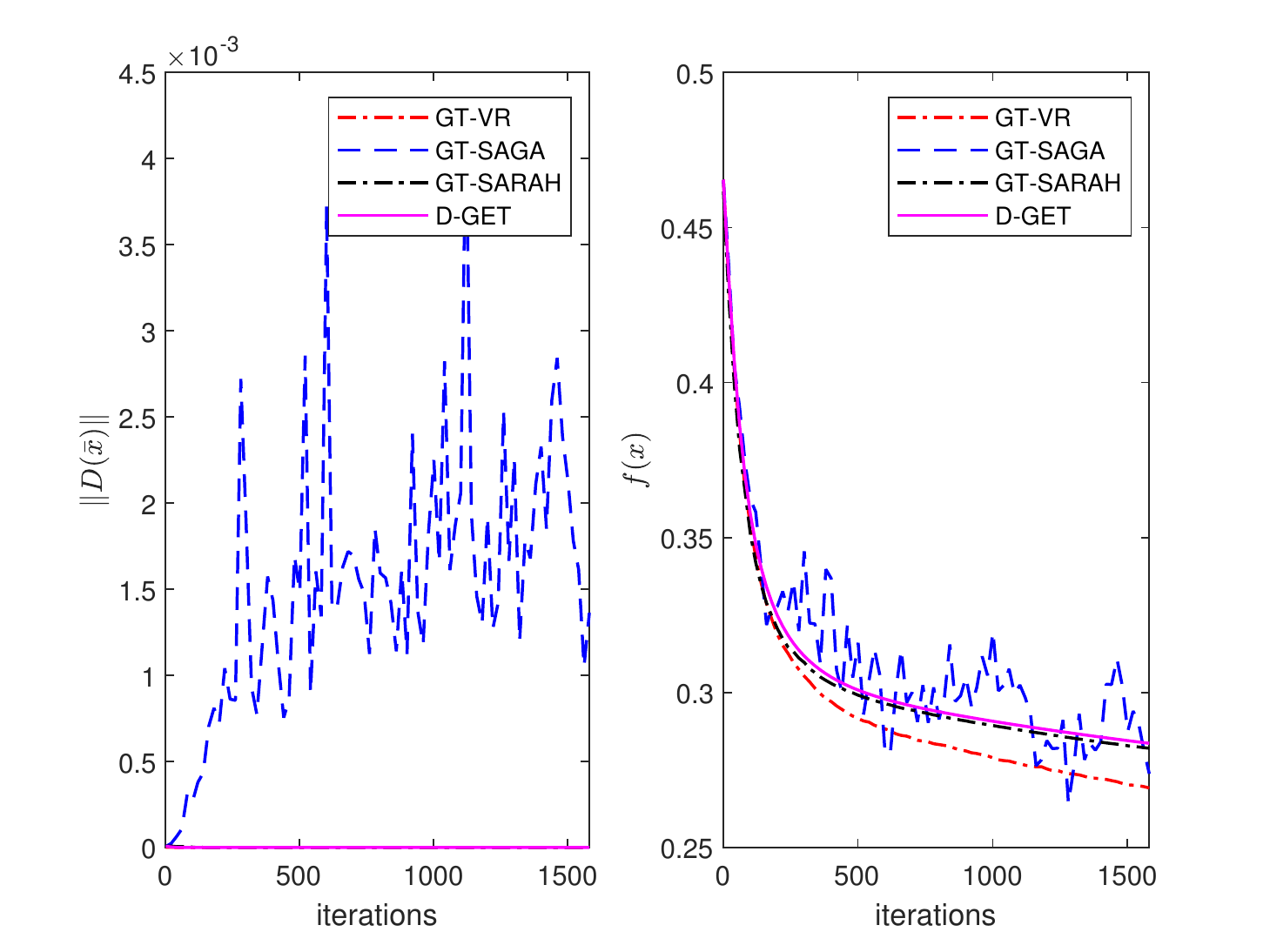}
		\label{a9a_fig}
	}
	
	\subfigure[w8a dataset]{
		\includegraphics[width=8 cm, height = 4.7 cm]{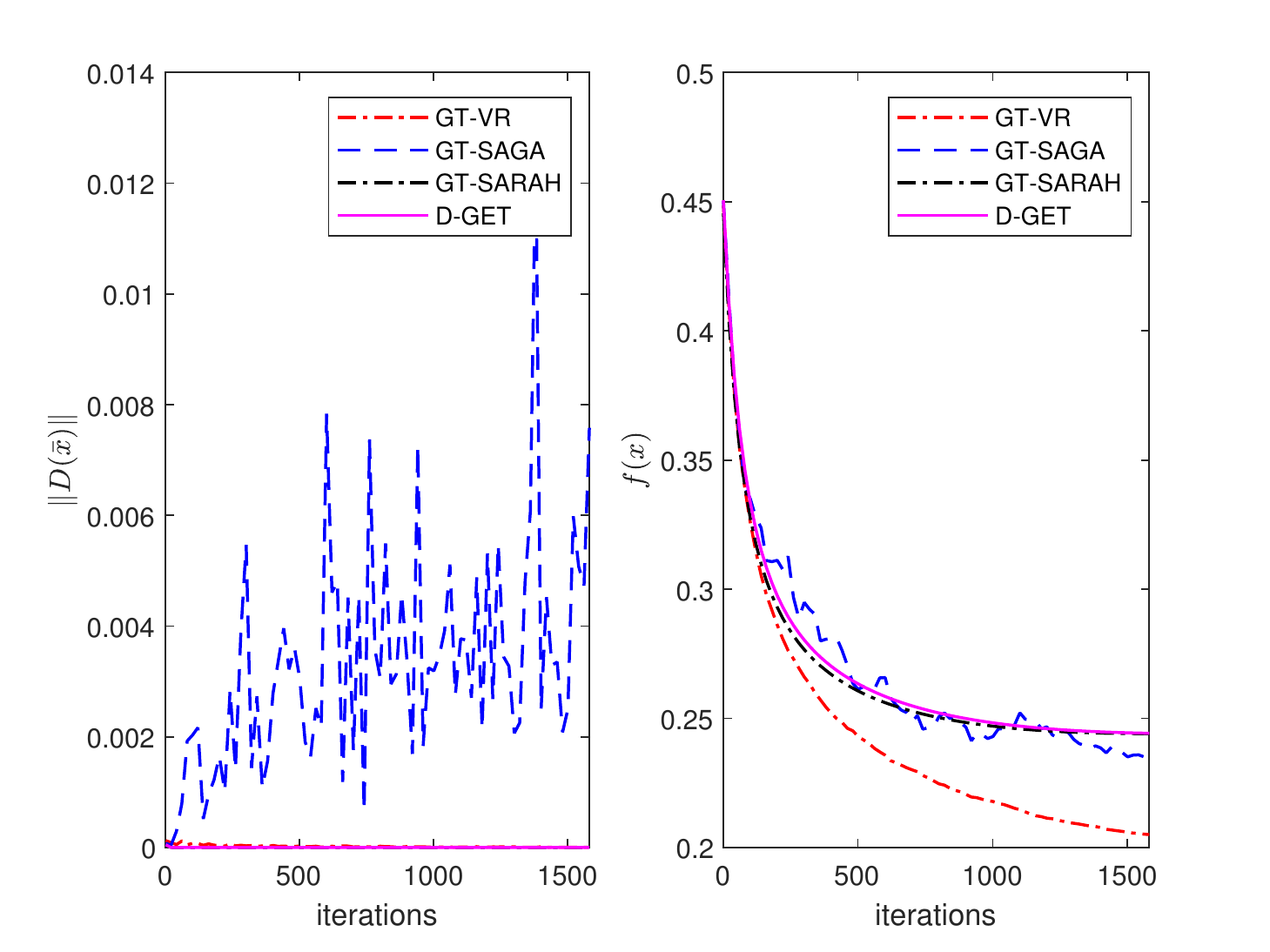}
		\label{w8a_fig}
	}
	\subfigure[covtype.binary dataset]{
		\includegraphics[width=8 cm, height = 4.7 cm]{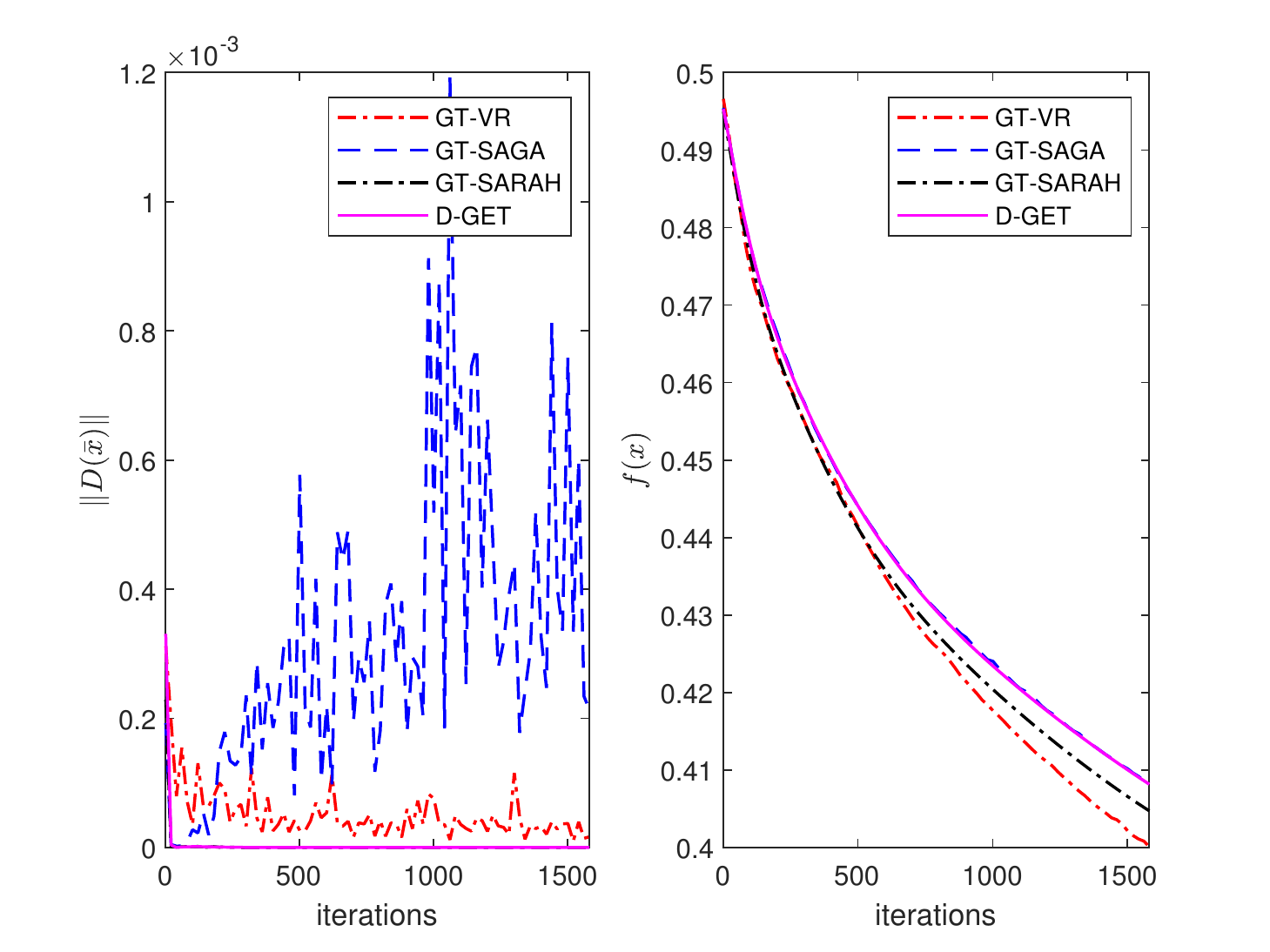}
		\label{cov_fig}
	}
	\caption{The convergence behaviors of GT-VR, GT-SAGA, GT-HSGD, GT-SARAH, D-GET over the a9a and w8a datasets}
	\label{conv_fig}
\end{figure}

In this experiment, we use the publicly available real datasets\footnote{$a9a
	$, $w8a$ and $covtype.binary$ are from the website
	www.csie.ntu.edu.tw/~cjlin/libsvmtools/datasets/.}, which are summarized in Table \ref{real_data}. All algorithms are applied over a ten-agent undirected connected network with a doubly stochastic adjacent matrix $W$ to solve \eqref{simu_pro}. Meanwhile, $\lambda_1\triangleq 5\times 10^{-4}$. For the proposed GT-VR algorithm, the probability $P$ is taken as $0.3$ and the step-size is taken as $0.1$. Note that the ranges of step-size and possibility provided in Theorem \ref{conver_theo} are rigorous theoretical results. In practice, we can adjust them according to the convergence performance. For comparison, the local variables in all algorithms are initialized as zero and all the algorithms take same step-sizes. The simulation codes are provided at \url{https://github.com/managerjiang/GT_VR_Simulation}.
\par Define $D(\bar{x})=\sum_{i=1}^{10}x_i'\sum_{j=1}^{10} w_{ij}(x_i-x_j)$. The trajectory of $D(\bar{x})$ converging to zero implies that the variable estimates of different agents achieve consensus. For different datasets, the trajectories of cost function $f$ and $D(\bar{x})$ are shown in Fig. \ref{conv_fig}. We observe that for all datasets, the algorithms all have good consensus performance. For the trajectories of cost function, the algorithm GT-VR decays faster than the state-of-the-art algorithms D-GET, GT-SAGA and GT-SARAH, especially for the w8a dataset, demonstrating the excellent iteration complexity of GT-VR.
%

\section{Conclusion}\label{conclusion} 
Focusing on distributed non-convex stochastic optimization, this paper has developed a novel variance-reduced distributed stochastic first-order  algorithm over undirected and weight-balanced directed graphs by combining gradient tracking and variance reduction. The variance reduction technique makes use of Bernoulli distribution to handle the variance by stochastic gradients. The proposed algorithm converges with $O(\frac{1}{k})$ rate and has lower iteration complexity compared with some existing excellent algorithms. By comparative simulations, the proposed algorithm presents better convergence performance than state-of-the-art algorithms.

\section{appendix}\label{append}
 \subsection{\bf Proof of Proposition \ref{vnablaf}}\label{vnablaproof}
\begin{proof}
	(a) For convenience, we define $\mathcal{A}^k\triangleq \sigma(\cup_{i=1}^n \sigma(l_i^k),\mathcal{F}^k)$ and clearly $\mathcal{F}^k \subseteq \mathcal{A}^k$. By the tower property of the conditional expectation, we have 
	\begin{align}\label{tower_con0}
		\mathbb{E}[\|v_i^k-\nabla f_i(x_i^k)\|^2|\mathcal{F}^k]=\mathbb{E}[\mathbb{E}[\|v_i^k-\nabla f_i(x_i^k)\|^2|\mathcal{A}^k]|\mathcal{F}^k].
	\end{align}
	For $\mathbb{E}[\|v_i^k-\nabla f_i(x_i^k)\|^2|\mathcal{A}^k]$, we have
	\begin{align}\label{vsubf}
		&\mathbb{E}[\|v_i^k-\nabla f_i(x_i^k)\|^2|\mathcal{A}^k]\notag\\
		=&\mathbb{E}[\|\nabla f_{i,s_i^k}(x_i^k)\!-\!\nabla f_{i,s_i^k}(\tau_i^k)\!-\!(\nabla f_i(x_i^k)\!-\!\nabla f_i(\tau_i^k))\|^2|\mathcal{A}^k]\notag\\
		\leq &\mathbb{E}[\|\nabla f_{i,s_i^k}(x_i^k)-\nabla f_{i,s_i^k}(\tau_i^k)\|^2|\mathcal{A}^k]\notag\\
		=& \frac{1}{m_i}\sum_{j=1}^{m_i}\|\nabla f_{i,j}(x_i^k)\!-\!\nabla f_{i,j}(\bar{x}^k)+\nabla f_{i,j}(\bar{x}^k)\!-\!\nabla f_{i,j}(\tau_i^k)\|^2\notag\\
		\leq & 2 L^2 \|x_i^k-\bar{x}^k\|^2+2L^2\|\tau_i^k-\bar{x}^k\|^2
	\end{align}
	where the first inequality is from the standard conditional variance decomposition
	\begin{align*}
		&\mathbb{E}[\|a_i^k-\mathbb{E}[a_i^k|\mathcal{A}^k]\|^2|\mathcal{A}^k]\\
		=&\mathbb{E}[\|a_i^k\|^2|\mathcal{A}^k]-\|\mathbb{E}[a_i^k|\mathcal{A}^k]\|^2\\
		\leq & \mathbb{E}[\|a_i^k\|^2|\mathcal{A}^k],
	\end{align*}
	with $a_i^k=\nabla f_{i,s_i^k}(x_i^k)-\nabla f_{i,s_i^k}(\tau_i^k)$. 
	Substitute the above inequality to \eqref{tower_con0},
	\begin{align}\label{vsubnablaf}
		&\mathbb{E}[\|v_i^k-\nabla f_i(x_i^k)\|^2|\mathcal{F}^k]\notag\\
		\leq &2 L^2 \mathbb{E}[\|x_i^k-\bar{x}^k\|^2|\mathcal{F}^k]+2L^2\mathbb{E}[\|\tau_i^k-\bar{x}^k\|^2\mathcal{F}^k].
	\end{align}
	The proof follows by summing \eqref{vsubnablaf} over $i$ and taking the total expectation on both sides.
	\par (b) With the result in (a), $\mathbb{E} [\|\frac{1}{n}\sum_{i=1}^n(v_i^k-\nabla f_i(\bar{x}^k))\|^2|\mathcal{F}^k]$ satisfies \begin{align}\label{vinablfi}
		&\mathbb{E} [\|\frac{1}{n}\sum_{i=1}^n(v_i^k-\nabla f_i(\bar{x}^k))\|^2|\mathcal{F}^k]\notag\\
		\leq & \frac{1}{n}\sum_{i=1}^n \mathbb{E}[\|v_i^k-\nabla f_i(\bar{x}^k)\|^2|\mathcal{F}^k]\notag\\
		\leq &\frac{1}{n}\sum_{i=1}^n(2\mathbb{E}[\|v_i^k-\nabla f_i(x_i^k)\|^2|\mathcal{F}^k]\notag\\
		&+2\mathbb{E}[\|\nabla f_i(x_i^k)-\nabla f_i(\bar{x}^k)\|^2|\mathcal{F}^k])\notag\\
		\overset{ \eqref{vsubnablaf}}{\leq} &\frac{1}{n}\sum_{i=1}^n\big(2(2 L^2 \mathbb{E}[\|x_i^k \!-\!\bar{x}^k\|^2|\mathcal{F}^k]+2L^2\mathbb{E}[\|\tau_i^k \!-\!\bar{x}^k\|^2|\mathcal{F}^k])\notag\\
		&+2L^2\mathbb{E}[\|x_i^k-\bar{x}^k\|^2|\mathcal{F}^k]\big)\notag\\
		=&\frac{1}{n}\sum_{i=1}^n(6L^2\mathbb{E}[\|x_i^k-\bar{x}^k\|^2|\mathcal{F}^k]+4L^2\mathbb{E}[\|\tau_i^k-\bar{x}^k\|^2|\mathcal{F}^k]).
	\end{align}
	The proof follows by taking the total expectation on both sides of \eqref{vinablfi}.
	\par (c) By Young's inequality and the result in (b), $\mathbb{E}[\|\bar{v}^k\|^2]$ satisfies \begin{align*}
		\mathbb{E}[\|\bar{v}^k\|^2]\leq &2\mathbb{E}[\|\nabla f(\bar{x}^k)\|^2]+2\mathbb{E}[\|\frac{1}{n}\sum_{i=1}^n(v_i^k-\nabla f_i(\bar{x}^k))\|^2]\\
		\leq & 2\mathbb{E}[\|\nabla f(\bar{x}^k)\|^2]+\frac{2}{n}\sum_{i=1}^n(6L^2\mathbb{E}[\|x_i^k-\bar{x}^k\|^2]\\
		&+4L^2\mathbb{E}[\|\tau_i^k-\bar{x}^k\|^2]),
	\end{align*}
where the last inequality follows from \eqref{vsubfcondi}.
\end{proof}
\subsection {\bf Proof of Proposition \ref{xsub}} \label{x_proof}
 \begin{proof} 
 \par (a) Recall that $\bar{x}^k=\mathbf{1}_n\otimes \bar{x}^k$.  By \eqref{x_up} and \eqref{bareq},
 \begin{align}\label{xbarx}
 &\|{\bf x}^{k+1}-{\bf \bar{x}}^{k+1}\|^2\notag\\
 =&\|(W\otimes I_d) [{\bf x}^k-{\bf \bar{x}}^k-\eta({\bf y}^k-{\bf \bar{v}}^k)]\|^2\notag\\
 \leq & 2\rho^2\|{\bf x}^k-{\bf \bar{x}}^k\|^2+2\rho^2\eta^2\|{\bf y}^k-{\bf \bar{v}}^k\|^2.
 \end{align}
We take the total expectation on both sides of \eqref{xbarx} to obtain
 \begin{align}\label{x_1}
 &\mathbb{E}[\|{\bf x}^{k+1}-{\bf \bar{x}}^{k+1}\|^2]\notag\\
 \leq &2\rho^2\mathbb{E}[\|{\bf x}^k-{\bf \bar{x}}^k\|^2]+2\rho^2\eta^2\mathbb{E}[\|{\bf y}^k-{\bf \bar{v}}^k\|^2].
 \end{align}

(b) Recall the Bernoulli distribution in GT-VR that 
$$\mathbb{E}[\mathbb{I}_{\{l_i^{k+1}= 1\}}|\mathcal{F}^{k+1}]=P \ {\rm and} \ \mathbb{E}[\mathbb{I}_{\{l_i^{k+1}\neq 1\}}|\mathcal{F}^{k+1}]=1-P.$$
Then,  
\begin{align}\label{xsubtaucondi2}
&\mathbb{E}[\|\bar{x}^{k+1}-\tau_i^{k+1}\|^2|\mathcal{F}^{k+1}]\notag\\
=& \mathbb{E}\big[\|\bar{x}^{k+1}-\big(\mathbb{I}_{\{l_i^{k+1}=1\}} x_i^{k+1}+\mathbb{I}_{\{l_i^{k+1}\neq 1\}} \tau_i^{k}\big)\|^2|\mathcal{F}^{k+1}\big]\notag\\
=& \mathbb{E}[\|\bar{x}^{k+1}\|^2|\mathcal{F}^{k+1}]\notag\\
&+ \mathbb{E}[\|\mathbb{I}_{\{l_i^{k+1}= 1\}} x_i^{k+1} +\mathbb{I}_{\{l_i^{k+1}\neq 1\}} \tau_i^k\|^2|\mathcal{F}^{k+1}]\notag\\
&-2\mathbb{E}\big[\langle \bar{x}^{k+1}, \mathbb{I}_{\{l_i^{k+1}= 1\}} x_i^{k+1}+\mathbb{I}_{\{l_i^{k+1}\neq 1\}} \tau_i^k\rangle|\mathcal{F}^{k+1} \big]\notag\\
=& \|\bar{x}^{k+1}\|^2+P \| x_i^{k+1}\|^2 + (1-P)\| \tau_i^k\|^2\notag\\
&-2\langle \bar{x}^{k+1}, P x_i^{k+1}+(1-P) \tau_i^k\rangle\notag\\
=& P \|\bar{x}^{k+1}-x_i^{k+1}\|^2 + (1-P)\|\bar{x}^{k+1}-\tau_i^k\|^2.
\end{align}
Summing over $i$ and taking the total expectation on both sides,
\begin{align*}
&\mathbb{E}[\|\mathbf{\bar{x}}^{k+1}-\tau^{k+1}\|^2]\\
=&P\mathbb{E}[\mathbf{\|\bar{x}}^{k+1}-\mathbf{x}^{k+1}\|^2]+(1-P) \mathbb{E}[\|\mathbf{\bar{x}}^{k+1}-\tau^k\|^2].
\end{align*}
For the second term,
\begin{align}\label{t2}
&\mathbb{E}[\|\mathbf{\bar{x}}^{k+1}-\tau^k\|^2] \notag\\
=&\mathbb{E}[ \|\mathbf{\bar{x}}^{k+1}-\mathbf{\bar{x}}^k +\mathbf{\bar{x}}^k -\tau^k\|^2]\notag\\
=&\mathbb{E}[\eta^2 n \|\bar{v}^k\|^2 +\|\mathbf{\bar{x}}^k -\tau^k\|^2-2\eta \langle \mathbf{\bar{v}}^k,\mathbf{\bar{x}}^k -\tau^k \rangle] \notag\\
\leq & \mathbb{E}[\eta^2 n \|\bar{v}^k\|^2 +\|\mathbf{\bar{x}}^k -\tau^k\|^2+\frac{\eta}{\beta} n \|\bar{v}^k\|^2+\eta\beta \|\mathbf{\bar{x}}^k -\tau^k\|^2] \notag\\
=& (\eta^2+\frac{\eta}{\beta})n \mathbb{E}[\|\bar{v}^k \|^2] + (1+\eta\beta  )\mathbb{E}[\|\mathbf{\bar{x}}^k -\tau^k\|^2] \notag\\
\leq & (\eta^2+\frac{\eta}{\beta}) n \big( 2 \mathbb{E}\|\nabla f(\bar{x}^k)\|^2+\frac{12L^2}{n} \mathbb{E}\|\mathbf{x}^k-\mathbf{\bar{x}}^k\|^2\notag\\
&+\frac{8L^2}{n} \mathbb{E}\|\tau^k-\mathbf{\bar{x}}^k\|^2\big)+ (1+\eta\beta  )\mathbb{E}[\|\mathbf{\bar{x}}^k -\tau^k\|^2],
\end{align}
where the last inequality holds by Proposition \ref{vnablaf} (c).
\par By \eqref{x_1} and \eqref{t2},
\begin{align*}
&\mathbb{E}[\|\mathbf{\bar{x}}^{k+1}-\tau^{k+1}\|^2]\\
\leq &P \big(2\rho^2 \mathbb{E}\|\mathbf{\bar{x}}^k-\mathbf{x}^k\|^2+ 2\rho^2 \eta^2 \mathbb{E}\|\mathbf{y}^k-\mathbf{\bar{v}}^k\|^2 \big)\\
&+(1\!-\! P) \Big((\eta^2\!+\!\frac{\eta}{\beta}) \big( 2n \mathbb{E}\|\nabla f(\bar{x}^k)\|^2+12L^2\mathbb{E}\|\mathbf{x}^k-\mathbf{\bar{x}}^k\|^2\\
&+8L^2\mathbb{E}\|\tau^k-\mathbf{\bar{x}}^k\|^2\big)+ (1+\eta\beta)\mathbb{E}\|\mathbf{\bar{x}}^k -\tau^k\|^2\Big)\\
=& \big(2\rho^2 P +(1-P)(\eta^2+\frac{\eta}{\beta})12L^2\big) \mathbb{E}\|\mathbf{\bar{x}}^k-\mathbf{x}^k\|^2 \\
&+ 2\rho^2 \eta^2 P \mathbb{E}\|\mathbf{y}^k-\mathbf{\bar{v}}^k\|^2+(1-P)\big((\eta^2+\frac{\eta}{\beta})8L^2\\
&+(1\!+\!\eta\beta)\big) \mathbb{E}\|\mathbf{\bar{x}}^k \!-\!\tau^k\|^2\!+\! (\eta^2+\frac{\eta}{\beta}) 2n(1\!-\! P) \mathbb{E}\|\nabla f(\bar{x}^k)\|^2.
\end{align*}

(c) By \eqref{y_up}, we have 
	\begin{align}\label{ysubv}
	&\mathbf{y}^{k+1}-\mathbf{\bar{v}}^{k+1}\notag\\
	=&(W\otimes I_d)(\mathbf{y}^k-\mathbf{\bar{v}}^k+\mathbf{v}^{k+1}-\mathbf{v}^k-\mathbf{\bar{v}}^{k+1}+\mathbf{\bar{v}}^k).
	\end{align}
	\par For the term $\mathbf{v}^{k+1}-\mathbf{v}^k-\mathbf{\bar{v}}^{k+1}+\mathbf{\bar{v}}^k$ in \eqref{ysubv}, it satisfies 
	\begin{align}\label{vvbarv}
	&\|\mathbf{v}^{k+1}-\mathbf{v}^k-\mathbf{\bar{v}}^{k+1}+\mathbf{\bar{v}}^k\|^2\notag\\
	=&\|\mathbf{v}^{k+1}\!-\!\mathbf{v}^k\|^2\!+\!\|\mathbf{\bar{v}}^{k+1}\!-\!\mathbf{\bar{v}}^k\|^2\!-\!2\sum_{i=1}^n\langle{v}_i^{k+1}\!-\!{v}_i^k,{\bar{v}}^{k+1}\!-\!{\bar{v}}^k\rangle\notag\\
	=&\|\mathbf{v}^{k+1}-\mathbf{v}^k\|^2-\|\mathbf{\bar{v}}^{k+1}-\mathbf{\bar{v}}^k\|^2\notag\\
	\leq & \|\mathbf{v}^{k+1}-\mathbf{v}^k\|^2.
	\end{align}
	Hence, it follows from Lemma \ref{W_lemma}, \eqref{ysubv} and \eqref{vvbarv} that
	\begin{align*}
	&\|\mathbf{y}^{k+1}-\mathbf{\bar{v}}^{k+1}\|^2\\
	\leq & 2\rho^2 (\|\mathbf{y}^k-\mathbf{\bar{v}}^k\|^2+\|\mathbf{v}^{k+1}-\mathbf{v}^k\|^2).
	\end{align*}
	 By taking the total expectation on both sides,
	\begin{align*}
	\mathbb{E}\|{\bf y}^{k+1}\!-\!{\bf \bar{v}}^{k+1}\|^2\leq 2\rho^2 \mathbb{E}\|{\bf y}^k \! -\!{\bf \bar{v}}^k\|^2\!+\! 2 \rho^2 \mathbb{E}\|\mathbf{v}^{k+1}\!-\!\mathbf{v}^k\|^2.
	\end{align*}
	The second term $\mathbb{E}\|\mathbf{v}^{k+1}\!-\!\mathbf{v}^k\|^2$ in the above inequality satisfies
	\begin{align}\label{vv1}
	&\mathbb{E}\|\mathbf{v}^{k+1}-\mathbf{v}^k\|^2\notag\\
	\leq &2\mathbb{E}[\|\nabla f(\mathbf{x}^{k+1})-\nabla f(\mathbf{x}^k)\|^2]\notag\\
	&+2\mathbb{E}[\|\mathbf{v}^{k+1}-\mathbf{v}^k-\nabla f(\mathbf{x}^{k+1})+\nabla f(\mathbf{x}^k)\|^2]\notag\\
	\leq & 2L^2\mathbb{E}\|\mathbf{x}^{k+1}\!-\!\mathbf{x}^k\|^2\!+\!2\mathbb{E}[\|\mathbf{v}^{k+1}\!-\!\mathbf{v}^k \!-\!\nabla f(\mathbf{x}^{k+1})\!+\!\nabla f(\mathbf{x}^k)\|^2]\notag\\
	\leq & 2L^2\mathbb{E}\|\mathbf{x}^{k+1}-\mathbf{x}^k\|^2+4\mathbb{E}\|\mathbf{v}^{k+1}-\nabla f(\mathbf{x}^{k+1})\|^2\notag\\
	&+4\mathbb{E}\|\mathbf{v}^k-\nabla f(\mathbf{x}^k)\|^2\notag\\
	\leq &  2L^2\mathbb{E}\|\mathbf{x}^{k+1}-\mathbf{x}^k\|^2+8L^2 \mathbb{E}\|\mathbf{x}^{k+1}-\mathbf{\bar{x}}^{k+1}\|^2\notag\\
	&+8L^2\mathbb{E}\|\tau^{k+1}-\mathbf{\bar{x}}^{k+1}\|^2\notag\\
	&+8L^2 \mathbb{E}\|\mathbf{x}^k-\mathbf{\bar{x}}^k\|^2+8L^2\mathbb{E}\|\tau^k-\mathbf{\bar{x}}^k\|^2,
	\end{align}
	where the last equality is from the result in Proposition \ref{vnablaf} (a). Then, for the first term $\|{\bf x}^{k+1}-{\bf x}^k\|^2$ in \eqref{vv1}, it satisfies
	\begin{align*}
	&\|{\bf x}^{k+1}-{\bf x}^k\|^2\\
	=&\|(W\otimes I_d -I_d){\bf x}^k-\eta (W\otimes I_d){\bf y}^k\|^2\\
	=&\|(W\otimes I_d -I_d)\big({\bf x}^k-\mathbf{\bar{x}}^k\big)-\eta (W\otimes I_d)\big({\bf y}^k-\mathbf{\bar{v}}^k\big)\\
	&-\eta \mathbf{1}_n \otimes { \bar{v}}^k\|^2\\
	\leq & 2\|(W\otimes I_d -I_d)\big({\bf x}^k-\mathbf{\bar{x}}^k\big)-\eta (W\otimes I_d)\big({\bf y}^k-\mathbf{\bar{v}}^k\big)\|^2\\
	&+2\eta^2 n \|{ \bar{v}}^k\|^2\\
	\leq &{4\|{\bf x}^k-\mathbf{\bar{x}}^k\|^2+4\eta^2\rho^2\|{\bf y}^k-\mathbf{\bar{v}}^k\|^2}+2\eta^2n \|{ \bar{v}}^k\|^2.
	\end{align*}
	Taking the total expectation, we have
	\begin{align}\label{xplusx}
	&\mathbb{E} \|{\bf x}^{k+1}-{\bf x}^k\|^2\notag\\
	\overset{\eqref{vcondi}}{\leq} &{4\mathbb{E}\|{\bf x}^k-\mathbf{\bar{x}}^k\|^2+4\eta^2\rho^2\mathbb{E}\|{\bf y}^k-\mathbf{\bar{v}}^k\|^2}\notag\\
	&+2\eta^2n\Big(2\mathbb{E}\|\nabla f(\bar{x}^k)\|^2+\frac{2}{n}\sum_{i=1}^n(6L^2\mathbb{E}\|x_i^k-\bar{x}^k\|^2\notag\\
	&\qquad+4L^2\mathbb{E}\|\tau_i^k-\bar{x}^k\|^2)\Big)\notag\\
	\leq &{4\mathbb{E}\|{\bf x}^k-\mathbf{\bar{x}}^k\|^2+4\eta^2\rho^2\mathbb{E}\|{\bf y}^k-\mathbf{\bar{v}}^k\|^2}\notag\\
	&+4\eta^2n\mathbb{E}\|\nabla f(\bar{x}^k)\|^2+24\eta^2L^2\mathbb{E}\|{\bf x}^k-{\bf \bar{x}}^k\|^2\notag\\
	&+16\eta^2L^2\mathbb{E}\|\tau^k-{\bf \bar{x}}^k\|^2\notag\\
	= &\big(4+24\eta^2L^2\big)\mathbb{E}\|{\bf x}^k-\mathbf{\bar{x}}^k\|^2+4\eta^2\rho^2\mathbb{E} \|{\bf y}^k-\mathbf{\bar{v}}^k\|^2\notag\\
	&+4\eta^2n\mathbb{E}\|\nabla f(\bar{x}^k)\|^2+16\eta^2L^2\mathbb{E}\|\tau^k-\mathbf{\bar{x}}^k\|^2.
	\end{align}
	Hence, $\mathbb{E}\|{\bf y}^{k+1}-{\bf \bar{v}}^{k+1}\|^2$ satisfies
	\begin{align*}
	&\mathbb{E}\|{\bf y}^{k+1}-{\bf \bar{v}}^{k+1}\|^2\\
	\leq&2\rho^2 \mathbb{E}\|{\bf y}^k-{\bf \bar{v}}^k\|^2+2\rho^2E\|{\bf v}^{k+1}-{\bf v}^k\|^2\\
	\leq &2\rho^2 \mathbb{E}\|{\bf y}^k-{\bf \bar{v}}^k\|^2+2\rho^2\big[2L^2 {\mathbb{E}\|{\bf x}^{k+1}-{\bf x}^k\|^2}\\
	&+8L^2 {\mathbb{E}\|{\bf x}^{k+1}-\mathbf{\bar{x}}^{k+1}\|^2}+8L^2{\mathbb{E}\|\tau^{k+1}-\mathbf{\bar{x}}^{k+1}\|^2}\\
	&+8L^2 \mathbb{E} \|{\bf x}^k-\mathbf{\bar{x}}^k\|^2+8L^2 \mathbb{E}\|\tau^k-\mathbf{\bar{x}}^k\|^2\big]\\
	\overset{\eqref{xplusx}}{\leq} &2\rho^2\mathbb{E} \|{\bf y}^k-{\bf \bar{v}}^k\|^2\!+\!2\rho^2\!\Big[\!2L^2\!\Big(\!\big(\!4\!+24\eta^2L^2\big)\mathbb{E}\|{\bf x}^k-\mathbf{\bar{x}}^k\|^2\\
	&+4\eta^2\rho^2\mathbb{E}\|{\bf y}^k-\mathbf{\bar{v}}^k\|^2+4\eta^2n\mathbb{E}\|\nabla f(\bar{x}^k)\|^2\\
	&{+16\eta^2L^2\mathbb{E}\|\tau^k-\mathbf{\bar{x}}^k\|^2\Big)}+8L^2 {\mathbb{E}\|{\bf x}^{k+1}-\mathbf{\bar{x}}^{k+1}\|^2}\\
	&+8L^2{\mathbb{E}\|\tau^{k+1}-\mathbf{\bar{x}}^{k+1}\|^2}\\
	&+8L^2 \mathbb{E}\|{\bf x}^k-\mathbf{\bar{x}}^k\|^2+8L^2\mathbb{E}\|\tau^k-\mathbf{\bar{x}}^k\|^2\Big]\\
	\overset{\eqref{xsubxbareq},\eqref{tausubxbar}}{\leq} &2\rho^2\mathbb{E} \|{\bf y}^k-{\bf \bar{v}}^k\|^2\!+\!2\rho^2\!\Bigg[\!2L^2\!\Big(\!\big(\!4\!+24\eta^2L^2\big)\mathbb{E}\|{\bf x}^k-\mathbf{\bar{x}}^k\|^2\\
	&+4\eta^2\rho^2\mathbb{E}\|{\bf y}^k-\mathbf{\bar{v}}^k\|^2+4\eta^2n\mathbb{E}\|\nabla f(\bar{x}^k)\|^2\\
	&{+16\eta^2L^2\mathbb{E}\|\tau^k-\mathbf{\bar{x}}^k\|^2}\Big)\\
	&+8L^2\bigg\{ {\Big[2\rho^2\mathbb{E}\|{\bf x}^k-{\bf \bar{x}}^k\|^2+2\rho^2\eta^2\mathbb{E}\|{\bf y}^k-{\bf \bar{v}}^k\|^2\Big]}\\
	&+\Big[(1-P)\big((\eta^2+\frac{\eta}{\beta})8L^2+(1+\eta\beta)\big)\mathbb{E} \|\tau^k-{\bf \bar{x}}^k\|^2\\
	&+\!(\eta^2\!+\!\frac{\eta}{\beta})2n (\!1\!-\!P)\mathbb{E}\|\nabla\! f(\bar{x}^k\!)\|^2\!+\! 2\rho^2\eta^2 P \mathbb{E}\|{\bf y}^k\!-\!{\bf \bar{v}}^k\!\|^2\\
	&+\big((1-P) (\eta^2+\frac{\eta}{\beta})12 L^2+ 2\rho^2 P \big)\mathbb{E}\|{\bf x}^k-{\bf \bar{x}}^k\|^2\Big]\bigg\}\\
	&+8L^2 \mathbb{E}\|{\bf x}^k-\mathbf{\bar{x}}^k\|^2+8L^2\mathbb{E}\|\tau^k-\mathbf{\bar{x}}^k\|^2\Bigg]\\
	\leq & (2\rho^2+48\eta^2L^2\rho^4+32 P \eta^2L^2\rho^4) \mathbb{E}\|{\bf y}^k-{\bf \bar{v}}^k\|^2\\
	&+16\rho^2L^2\big(1+6\eta^2L^2+(2+2P)\rho^2\notag\\
	&\qquad+12L^2(1-P)(\eta^2+\frac{\eta}{\beta})\big)\mathbb{E}[\|{\bf x}^k-{\bf \bar{x}}^k\|^2]\notag\\
	&+16\rho^2L^2\big(4\eta^2L^2+(1-P)[1+\eta\beta + (\eta^2+\frac{\eta}{\beta})8L^2]\big)\notag\\
	&\qquad\mathbb{E}[\|\tau^k-{\bf \bar{x}}^k\|^2]\notag\\
	&+16\rho^2L^2n\big(\eta^2+2 (1-P)(\eta^2+\frac{\eta}{\beta})\big)  \mathbb{E}[\|\nabla f(\bar{x}^k)\|^2].	
	\end{align*}
Now, with Assumption \ref{W_assump}, we have proved all inequalities in Proposition \ref{xsub}.
\end{proof}

\bibliographystyle{ieeetran}
\bibliography{refer}

\begin{thebibliography}{10}
\providecommand{\url}[1]{#1}
\csname url@samestyle\endcsname
\providecommand{\newblock}{\relax}
\providecommand{\bibinfo}[2]{#2}
\providecommand{\BIBentrySTDinterwordspacing}{\spaceskip=0pt\relax}
\providecommand{\BIBentryALTinterwordstretchfactor}{4}
\providecommand{\BIBentryALTinterwordspacing}{\spaceskip=\fontdimen2\font plus
\BIBentryALTinterwordstretchfactor\fontdimen3\font minus
  \fontdimen4\font\relax}
\providecommand{\BIBforeignlanguage}[2]{{%
\expandafter\ifx\csname l@#1\endcsname\relax
\typeout{** WARNING: IEEEtran.bst: No hyphenation pattern has been}%
\typeout{** loaded for the language `#1'. Using the pattern for}%
\typeout{** the default language instead.}%
\else
\language=\csname l@#1\endcsname
\fi
#2}}
\providecommand{\BIBdecl}{\relax}
\BIBdecl

\bibitem{distri_cs}
S.~Patterson, Y.~C. Eldar, and I.~Keidar, ``Distributed compressed sensing for
  static and time-varying networks,'' \emph{IEEE Transactions on Signal
  Processing}, vol.~62, no.~19, pp. 4931--4946, 2014.

\bibitem{dis_opti_cy}
X.~Wang, Y.~Hong, and H.~Ji, ``Distributed optimization for a class of
  nonlinear multiagent systems with disturbance rejection,'' \emph{IEEE
  Transactions on Cybernetics}, vol.~46, no.~7, pp. 1655--1666, 2016.

\bibitem{dis_mides_CIS}
Y.~D. X.~S. Xiong~Menghui, Zhang~Baoyong, ``Distributed quantized mirror
  descent for strongly convex optimization over time-varying directed graph,''
  \emph{SCIENCE CHINA Information Sciences}, pp. 1--1, 2021.

\bibitem{finite_sum_ll}
B.~Gu, A.~Xu, Z.~Huo, C.~Deng, and H.~Huang, ``Privacy-preserving asynchronous
  vertical federated learning algorithms for multiparty collaborative
  learning,'' \emph{IEEE Transactions on Neural Networks and Learning Systems},
  pp. 1--13, 2021.

\bibitem{decen_op}
R.~Xin, S.~Pu, A.~Nedić, and U.~A. Khan, ``A general framework for
  decentralized optimization with first-order methods,'' \emph{Proceedings of
  the IEEE}, vol. 108, no.~11, pp. 1869--1889, 2020.

\bibitem{distri_newton}
R.~Tutunov, H.~Bou-Ammar, and A.~Jadbabaie, ``Distributed {N}ewton method for
  large-scale consensus optimization,'' \emph{IEEE Transactions on Automatic
  Control}, vol.~64, no.~10, pp. 3983--3994, 2019.

\bibitem{priv_subg}
Y.~Lou, L.~Yu, S.~Wang, and P.~Yi, ``Privacy preservation in distributed
  subgradient optimization algorithms,'' \emph{IEEE Transactions on
  Cybernetics}, vol.~48, no.~7, pp. 2154--2165, 2018.

\bibitem{priv_time_varying}
S.~Mao, Y.~Tang, Z.~Dong, K.~Meng, Z.~Y. Dong, and F.~Qian, ``A privacy
  preserving distributed optimization algorithm for economic dispatch over
  time-varying directed networks,'' \emph{IEEE Transactions on Industrial
  Informatics}, vol.~17, no.~3, pp. 1689--1701, 2021.

\bibitem{mul_opti}
\BIBentryALTinterwordspacing
Z.~Deng and Y.~Hong, ``Multi-agent optimization design for autonomous
  lagrangian systems,'' \emph{Unmanned Systems}, vol.~04, no.~01, pp. 5--13,
  2016. [Online]. Available: \url{https://doi.org/10.1142/S230138501640001X}
\BIBentrySTDinterwordspacing

\bibitem{Dis_coop_CIS}
G.~C. W. R. X.~Y. Wenwu~YU, Jinde~CAO, ``Special focus on distributed
  cooperative analysis, control and optimization in networks,'' \emph{SCIENCE
  CHINA Information Sciences}, vol.~60, no.~11, pp. 1--1, 2017.

\bibitem{distri_g_ll}
Z.~Li, B.~Liu, and Z.~Ding, ``Consensus-based cooperative algorithms for
  training over distributed data sets using stochastic gradients,'' \emph{IEEE
  Transactions on Neural Networks and Learning Systems}, pp. 1--11, 2021.

\bibitem{distri_ll_sg}
W.~Li, Z.~Wu, T.~Chen, L.~Li, and Q.~Ling, ``Communication-censored distributed
  stochastic gradient descent,'' \emph{IEEE Transactions on Neural Networks and
  Learning Systems}, pp. 1--13, 2021.

\bibitem{strong_sg_ll}
D.~Yuan, D.~W.~C. Ho, and S.~Xu, ``Stochastic strongly convex optimization via
  distributed epoch stochastic gradient algorithm,'' \emph{IEEE Transactions on
  Neural Networks and Learning Systems}, vol.~32, no.~6, pp. 2344--2357, 2021.

\bibitem{2010Distributed}
S.~SundharRam, A.~Nedic, and V.~V. Veeravalli, ``Distributed stochastic
  subgradient projection algorithms for convex optimization,'' \emph{Journal of
  Optimization Theory $\&$ Applications}, vol. 147, no.~3, pp. 516--545, 2010.

\bibitem{mirror_des}
Z.~Yu, D.~W.~C. Ho, and D.~Yuan, ``Distributed randomized gradient-free mirror
  descent algorithm for constrained optimization,'' \emph{IEEE Transactions on
  Automatic Control}, pp. 1--1, 2021.

\bibitem{Lei2018ASYMPTOTIC}
\BIBentryALTinterwordspacing
J.~Lei, H.-F. Chen, and H.-T. Fang, ``Asymptotic properties of primal-dual
  algorithm for distributed stochastic optimization over random networks with
  imperfect communications,'' \emph{SIAM Journal on Control and Optimization},
  vol.~56, no.~3, pp. 2159--2188, 2018. [Online]. Available:
  \url{https://doi.org/10.1137/16M1086133}
\BIBentrySTDinterwordspacing

\bibitem{Nedic2013Distributed}
A.~Nedic and A.~Olshevsky, ``Distributed optimization over time-varying
  directed graphs,'' \emph{IEEE Transactions on Automatic Control}, vol.~60,
  no.~3, pp. 601--615, 2013.

\bibitem{saga_pa}
A.~Defazio, F.~Bach, and S.~Lacoste-Julien, ``{SAGA:} a fast incremental
  gradient method with support for non-strongly convex composite objectives,''
  in \emph{Proceedings of the 27th International Conference on Neural
  Information Processing Systems - Volume 1}, ser. NIPS'14.\hskip 1em plus
  0.5em minus 0.4em\relax Cambridge, MA, USA: MIT Press, 2014, p. 1646–1654.

\bibitem{SVRG_pa}
R.~Johnson and T.~Zhang, ``Accelerating stochastic gradient descent using
  predictive variance reduction,'' in \emph{Proceedings of the 26th
  International Conference on Neural Information Processing Systems - Volume
  1}, ser. NIPS'13.\hskip 1em plus 0.5em minus 0.4em\relax Red Hook, NY, USA:
  Curran Associates Inc., 2013, p. 315–323.

\bibitem{SARAH_pa}
L.~M. Nguyen, J.~Liu, K.~Scheinberg, and M.~Tak\'{a}\v{c}, ``{SARAH:} a novel
  method for machine learning problems using stochastic recursive gradient,''
  in \emph{Proceedings of the 34th International Conference on Machine Learning
  - Volume 70}, ser. ICML'17.\hskip 1em plus 0.5em minus 0.4em\relax JMLR.org,
  2017, p. 2613–2621.

\bibitem{asyn_vr}
\BIBentryALTinterwordspacing
J.~Lei and U.~V. Shanbhag, ``Asynchronous variance-reduced block schemes for
  composite non-convex stochastic optimization: block-specific steplengths and
  adapted batch-sizes,'' \emph{Optimization Methods and Software}, vol.~0,
  no.~0, pp. 1--31, 2020. [Online]. Available:
  \url{https://doi.org/10.1080/10556788.2020.1746963}
\BIBentrySTDinterwordspacing

\bibitem{gt-vr-strong}
\BIBentryALTinterwordspacing
R.~Xin, U.~A. Khan, and S.~Kar, ``Variance-reduced decentralized stochastic
  optimization with accelerated convergence,'' \emph{{IEEE} Trans. Signal
  Process.}, vol.~68, pp. 6255--6271, 2020. [Online]. Available:
  \url{https://doi.org/10.1109/TSP.2020.3031071}
\BIBentrySTDinterwordspacing

\bibitem{dis_stoch}
J.~Lei, P.~Yi, J.~Chen, and Y.~Hong, ``A communication-efficient linearly
  convergent algorithm with variance reduction for distributed stochastic
  optimization,'' in \emph{2020 European Control Conference (ECC)}, 2020, pp.
  1250--1255.

\bibitem{linear_vr}
------, ``Linearly convergent algorithm with variance reduction for distributed
  stochastic optimization,'' 2020.

\bibitem{next_pa}
\BIBentryALTinterwordspacing
P.~D. Lorenzo and G.~Scutari, ``{NEXT:} in-network nonconvex optimization,''
  \emph{{IEEE} Trans. Signal Inf. Process. over Networks}, vol.~2, no.~2, pp.
  120--136, 2016. [Online]. Available:
  \url{https://doi.org/10.1109/TSIPN.2016.2524588}
\BIBentrySTDinterwordspacing

\bibitem{linear_con_pa}
\BIBentryALTinterwordspacing
A.~Nedić, A.~Olshevsky, and W.~Shi, ``Achieving geometric convergence for
  distributed optimization over time-varying graphs,'' \emph{SIAM Journal on
  Optimization}, vol.~27, no.~4, pp. 2597--2633, 2017. [Online]. Available:
  \url{https://doi.org/10.1137/16M1084316}
\BIBentrySTDinterwordspacing

\bibitem{extra_weishi}
\BIBentryALTinterwordspacing
W.~Shi, Q.~Ling, G.~Wu, and W.~Yin, ``{EXTRA:} an exact first-order algorithm
  for decentralized consensus optimization,'' \emph{SIAM Journal on
  Optimization}, vol.~25, no.~2, pp. 944--966, 2015. [Online]. Available:
  \url{https://doi.org/10.1137/14096668X}
\BIBentrySTDinterwordspacing

\bibitem{DLAforcons}
N.~S. Aybat, Z.~Wang, T.~Lin, and S.~Ma, ``Distributed linearized alternating
  direction method of multipliers for composite convex consensus
  optimization,'' \emph{IEEE Transactions on Automatic Control}, vol.~63,
  no.~1, pp. 5--20, 2018.

\bibitem{Ling2015DLMDL}
Q.~Ling, W.~Shi, G.~Wu, and A.~Ribeiro, ``{DLM}: Decentralized linearized
  alternating direction method of multipliers,'' \emph{IEEE Transactions on
  Signal Processing}, vol.~63, pp. 4051--4064, 2015.

\bibitem{sun2019improving}
H.~Sun, S.~Lu, and M.~Hong, ``Improving the sample and communication complexity
  for decentralized non-convex optimization: A joint gradient estimation and
  tracking approach,'' 2019.

\bibitem{xin2020nearoptimal}
R.~Xin, U.~A. Khan, and S.~Kar, ``A near-optimal stochastic gradient method for
  decentralized non-convex finite-sum optimization,'' 2020.

\bibitem{xin2020fast}
------, ``A fast randomized incremental gradient method for decentralized
  non-convex optimization,'' 2020.

\bibitem{DSGT}
------, ``An improved convergence analysis for decentralized online stochastic
  non-convex optimization,'' \emph{IEEE Transactions on Signal Processing},
  vol.~69, pp. 1842--1858, 2021.

\bibitem{pmlr-v80-tang18a}
H.~Tang, X.~Lian, M.~Yan, C.~Zhang, and J.~Liu, ``$d^2$: Decentralized training
  over decentralized data,'' in \emph{Proceedings of the 35th International
  Conference on Machine Learning}, ser. Proceedings of Machine Learning
  Research, vol.~80.\hskip 1em plus 0.5em minus 0.4em\relax PMLR, 10--15 Jul
  2018, pp. 4848--4856.

\bibitem{DSGD}
X.~Lian, C.~Zhang, H.~Zhang, C.-J. Hsieh, W.~Zhang, and J.~Liu, ``Can
  decentralized algorithms outperform centralized algorithms? a case study for
  decentralized parallel stochastic gradient descent,'' in \emph{Proceedings of
  the 31st International Conference on Neural Information Processing Systems},
  ser. NIPS'17.\hskip 1em plus 0.5em minus 0.4em\relax Red Hook, NY, USA:
  Curran Associates Inc., 2017, p. 5336–5346.

\bibitem{mini_pro}
T.~Yu, X.-W. Liu, Y.-H. Dai, and J.~Sun, ``A minibatch proximal stochastic
  recursive gradient algorithm using a trust-region-like scheme and
  barzilai-borwein stepsizes,'' \emph{IEEE Transactions on Neural Networks and
  Learning Systems}, pp. 1--12, 2020.

\bibitem{sto_cg}
X.-B. Jin, X.-Y. Zhang, K.~Huang, and G.-G. Geng, ``Stochastic conjugate
  gradient algorithm with variance reduction,'' \emph{IEEE Transactions on
  Neural Networks and Learning Systems}, vol.~30, no.~5, pp. 1360--1369, 2019.

\bibitem{mach_learn_overair}
M.~Mohammadi~Amiri and D.~Gündüz, ``Machine learning at the wireless edge:
  Distributed stochastic gradient descent over-the-air,'' \emph{IEEE
  Transactions on Signal Processing}, vol.~68, pp. 2155--2169, 2020.

\bibitem{training_reduc_commu}
Y.~Duan, N.~Wang, and J.~Wu, ``Minimizing training time of distributed machine
  learning by reducing data communication,'' \emph{IEEE Transactions on Network
  Science and Engineering}, vol.~8, no.~2, pp. 1802--1814, 2021.

\bibitem{zero_gt}
Y.~Tang, J.~Zhang, and N.~Li, ``Distributed zero-order algorithms for nonconvex
  multiagent optimization,'' \emph{IEEE Transactions on Control of Network
  Systems}, vol.~8, no.~1, pp. 269--281, 2021.

\bibitem{xin2021hybrid}
R.~Xin, U.~A. Khan, and S.~Kar, ``A hybrid variance-reduced method for
  decentralized stochastic non-convex optimization,'' 2021.

\bibitem{spectral_book}
R.~A. Horn and C.~R. Johnson, \emph{Matrix analysis}.\hskip 1em plus 0.5em
  minus 0.4em\relax Cambridge University Press, 1990.

\bibitem{ROBBINS1971233}
\BIBentryALTinterwordspacing
H.~Robbins and D.~Siegmund, ``A convergence theorem for non negative almost
  supermartingales and some applications,'' in \emph{Optimizing Methods in
  Statistics}.\hskip 1em plus 0.5em minus 0.4em\relax Academic Press, 1971, pp.
  233--257. [Online]. Available:
  \url{https://www.sciencedirect.com/science/article/pii/B9780126045505500158}
\BIBentrySTDinterwordspacing

\end{thebibliography}

\end{document}